    \newcommand\contFrac{\@ifstar{\@contFracStar}{\@contFracNoStar}}
    \def\singleContFrac#1#2{%
        \begin{array}{@{}c@{}}%
            \multicolumn{1}{c|}{#1}%
            \\%
            \hline%
            \multicolumn{1}{|c}{#2}%
        \end{array}%
    }
    \def\@contFracNoStar#1{%
% //\@nil is usefull if only one argument is given.
        \mathchoice{% * Display style
            \@contFracNoStarDisplay@#1//\@nil%
        }{%           * Text style
            \@contFracNoStarInline@#1//\@nil%
        }{%           * Script style
            \@contFracNoStarInline@#1//\@nil%
        }{%           * Script script style
            \@contFracNoStarInline@#1//\@nil%
        }%
    }
    \def\@contFracNoStarDisplay@#1//#2\@nil{%
        \@ifmtarg{#2}{%
            #1%
        }{%
            #1+\cfrac{1}{\@contFracNoStarDisplay@#2\@nil}%
        }%
    }
        \def\@contFracNoStarInline@#1//#2\@nil{%
            \@ifmtarg{#2}{%
                #1%
            }{%
                #1 \@@contFracNoStarInline@@#2\@nil%
            }%
        }
        \def\@@contFracNoStarInline@@#1//#2\@nil{%
            \@ifmtarg{#2}{%
                + \singleContFrac{1}{#1}%
            }{%
                + \singleContFrac{1}{#1} \@@contFracNoStarInline@@#2\@nil%
            }%
        }
    \def\@contFracStar#1{%
        \mathchoice{% * Display style
% ////\@nil is usefull if only one argument is given.
            \@contFracStarDisplay@#1////\@nil%
        }{%           * Text style
% //\@nil is usefull if only one argument is given.
            \@contFracStarInline@#1//\@nil%
        }{%           * Script style
            \@contFracStarInline@#1//\@nil%
        }{%           * Script script style
            \@contFracStarInline@#1//\@nil%
        }%
    }
    \def\@contFracStarDisplay@#1//#2//#3\@nil{%
        \@ifmtarg{#2}{%
            #1%
        }{%
            #1 + \cfrac{#2}{\@contFracStarDisplay@#3\@nil}%
        }%
    }
        \def\@contFracStarInline@#1//#2\@nil{%
            \@ifmtarg{#2}{%
                #1%
            }{%
                #1 \@@contFracStarInline@@#2\@nil%
            }%
        }
        \def\@@contFracStarInline@@#1//#2//#3\@nil{%
            \@ifmtarg{#3}{%
                - \singleContFrac{#1}{#2}%
            }{%
                - \singleContFrac{#1}{#2} \@@contFracStarInline@@#3\@nil%
            }%
        }
\def\cFrac#1#2{%
\begin{array}{@{}c@{}}\multicolumn{1}{c|}{#1}\\%
\hline\multicolumn{1}{|c}{#2}\end{array}}
\theoremstyle{plain}
\newtheorem{theorem}{Theorem}[section]
\newtheorem{proposition}[theorem]{Proposition}
\theoremstyle{definition}
\theoremstyle{remark}
\newtheorem{remark}[theorem]{Remark}
\newtheorem*{remark*}{Remark}
\numberwithin{equation}{section}
\newcommand\D{\displaystyle}
\newcommand\NN{{\mathbb N}}
\title[BDC on a spider: spectral analysis and RA factorization]{Birth-death chains on a spider: spectral analysis\\ and reflecting-absorbing factorization}
\author{Manuel D. de la Iglesia}
\address{Manuel D. de la Iglesia\\
Instituto de Matem\'aticas, Universidad Nacional Aut\'onoma de M\'exico, Circuito Exterior, C.U., 04510, Ciudad de M\'exico, M\'exico.}
\email{mdi29@im.unam.mx}
\author{Claudia Juarez}
\address{Claudia Juarez\\
Instituto de Investigaciones en Matem\'aticas Aplicadas y en Sistemas, Universidad Nacional Aut\'onoma de M\'exico, Circuito Escolar 3000, C.U., 04510, Ciudad de M\'exico, M\'exico.}
\email{claudiajrz@sigma.iimas.unam.mx}
\date{\today}
\thanks{
}
\thanks{This work was partially supported by PAPIIT-DGAPA-UNAM grant IN104219 (M\'exico) and CONACYT grant A1-S-16202 (M\'exico).}
\date{\today}
\subjclass[2010]{60J10, 33C45, 42C05}
\keywords{Birth-death chains. Matrix factorizations. Darboux transformations. Orthogonal polynomials. Geronimus and Christoffel transformations}
\begin{document}

\maketitle

\begin{abstract}
We consider discrete-time birth-death chains on a spider, i.e. a graph consisting of $N$ discrete half lines on the plane that are joined at the origin. This process can be identified with a discrete-time quasi-birth-death process on the state space $\NN_0 \times \{1, 2, \dots, N\},$ represented by a block tridiagonal transition probability matrix. We prove that we can analyze this process by using spectral methods and obtain the $n$-step transition probabilities in terms of a weight matrix and the corresponding matrix-valued orthogonal polynomials (the so-called Karlin-McGregor formula). We also study under what conditions we can get a reflecting-absorbing factorization of the birth-death chain on a spider which can be seen as a stochastic UL block factorization of the transition probability matrix of the quasi-birth-death process. With this factorization we can perform a discrete Darboux transformation and get new families of ``almost'' birth-death chains on a spider. The spectral matrix associated with the Darboux transformation will be a Geronimus transformation of the original spectral matrix. Finally, we apply our results to the random walk on a spider, i.e. with constant transition probabilities.
\end{abstract}

\section{Introduction}

In \cite{IMcK} (Section 4.2, Problem 1), It\^{o} and McKean proposed an elementary but interesting diffusion process which they called \emph{skew Brownian motion}. After that, in 1978, Walsh \cite{Walsh} characterized this process as a Brownian motion with excursions around zero in random directions on the plane which takes values in $[0,2\pi)$ and call it a diffusion with a discontinuous local time. This diffusion is now called \emph{Walsh's Brownian motion}. Later, in 1989, Barlow, Pitman and Yor \cite{BPY} considered this process as a motion which lives on $N$ half lines on the plane, called \emph{legs} from now on, that are joined at the origin, called the \emph{body} of the spider. More recently, in 2003, Evans and Sowers \cite{ES} considered the same construction but with different methods and used the name \textit{Walsh's spider} for this process. In a few words, the process behaves like a regular Brownian motion on each one of the legs and once it reaches the body it continues on any of the $N$ legs with a given probability. For some other results related with this process the reader can consult \cite{Harrison, Lej, PPL, VY}.

\smallskip

If we replace the Brownian motions with simple symmetric random walks on the legs we get a discrete version of Walsh's spider, also called a \emph{random walk on a spider}. Hajri \cite{H} studied this discrete version as an approximation of the Walsh's spider. Some other results related with stochastic differential equations, limit theorems or weak convergence distributions can be found in \cite{Cherny, Csetal1, Csetal2, LeGall, Seol}. In Example 3.5 of \cite{DRSZ}, Dette et al treated the discrete version of Walsh's spider as a quasi-birth-death process with state space $\NN_0 \times \{1, 2, \dots, N\}$, which is a generalization of the birth-death process but allowing transitions between all the states of the second component (or phases). They studied the case of the symmetric random walk on a spider (which they call a tree) and gave an explicit expression of the corresponding spectral matrix, which was improved later by Gr\"{u}nbaum in \cite{G1} (see also \cite{G5}). For more information about quasi-birth-death processes the reader can consult \cite{LaR, Neu}.

\smallskip

In this paper we consider general discrete-time birth-death chains on a spider, allowing transition probabilities to depend on the state and on the leg in which the particle is living. This process behaves like a regular birth-death chain in each of the legs, but once it reaches the body of the spider it continues towards any of the $N$ legs with a given probability (including remaining at the body). In Section \ref{sec2} we make a precise definition of this process. In order to perform a spectral analysis it will be more convenient to treat this process as a quasi-birth-death process with state space $\NN_0 \times \{1, 2, \dots, N\}$, where $N$ is the number of legs. The transition probability matrix will be represented by a block tridiagonal matrix $\bm{P}$ (see \eqref{tpmm} below). Using the results in \cite{DRSZ} we prove in Proposition \ref{prop1} that there always exists a weight matrix (or spectral matrix) $\bm{W}$ associated with $\bm P$ and therefore we can obtain an integral representation of the $n$-step transition probability matrix $\bm P^n$ in terms of this spectral matrix and the associated matrix-valued orthogonal polynomials, also known as the \emph{Karlin-McGregor formula} (see \cite{KMc6, DRSZ, G2} or more recently the monograph \cite{MDIB}). Using the $2\times2$ block structure of the spectral matrix $\bm W$ we are able to get in Proposition \ref{prop2} an explicit expression of the \emph{Stieltjes transform} of $\bm W$ in terms of $N$ scalar spectral measures (one for each leg), which will be very useful for the study of the example in Section \ref{sec4}.

\smallskip

Following our previous works in \cite{dIJ1,dIJ2}, we consider in Section \ref{sec3} a stochastic factorization of the birth-death chain on a spider into a reflecting and an absorbing birth-death chain to the state 0. That will mean that we will be looking for possible stochastic UL factorizations of the transition probability matrix $\bm{P}$ of the form $\bm{P}=\bm{P_R}\bm{P_A}$. The difference with \cite{dIJ2} is that now we will have $N$ free parameters, one for each leg, and we will show that each of these parameters must be bounded from below by certain continued fraction built from the transition probabilities of each leg (see Theorem \ref{thm1} below). This new method of considering stochastic factorizations was introduced for the first time in \cite{GdI3} and later exploited in \cite{GdI4, dIJ1, dIJ2}. The motivation for these stochastic factorizations is to divide the probabilistic model of the original process into two different and simpler experiments. Once we have the conditions under we can perform a reflecting-absorbing factorization, we consider a \emph{discrete Darboux transformation}, consisting in reversing the order of multiplication of the factors. Therefore we will get a family of Markov chains on a spider depending on $N$ free parameters with transition probability matrix given by $\widetilde{\bm{P}}= \bm P_A  \bm P_R$. This new stochastic matrix $\widetilde{\bm{P}}$ describes an ``almost'' birth-death chain on a spider since now there will be extra transition probabilities between the first states of each leg. However, the matrix $\widetilde{\bm{P}}$ preserves the block tridiagonal structure and it will possible to derive the corresponding spectral matrix $\widetilde{\bm{W}}$ in terms of a \textit{Geronimous transformation} of $\bm{W}$ (see \eqref{spdtr} below).

\smallskip

In Section \ref{sec4}, we apply our results to the random walk on a spider, i.e. with constant transition probabilities (including self-transitions on each state). We derive an explicit expression of the spectral matrix $\bm{W}$ computing first its Stieltjes transform from Proposition \ref{prop2} and then using the Perron-Stieltjes inversion formula. This representation (as a $2\times2$ block matrix) is different and simpler from the ones given in \cite{DRSZ,G1}. We also apply Theorem \ref{thm1} to study conditions under we get a stochastic reflecting-absorbing factorization and perform the discrete Darboux transformation. Finally, we get an explicit expression of the spectral matrix associated with the Darboux transformation $\widetilde{\bm{W}}$.

\section{Spectral analysis of birth-death chains on a spider}
\label{sec2}

For $N\in\mathbb{N}$ consider the spider graph given by
$$
\mathbb{S}_N:=\{v_N(k,m), \;k\in\mathbb{N}_0,\; m=1,\ldots,N\},
$$
where
$$
v_N(k,m)=k\;\mbox{exp}\left(\frac{2\pi i(m-1)}{N}\right),\quad i=\sqrt{-1}.
$$
The number $N$ is the number of legs of the spider $\mathbb{S}_N$. If $N=1$ then we go back to regular birth-death chains on $\mathbb{N}_0$, while if $N=2$ we have a birth-death chain on $\mathbb{Z}$ (see \cite{KMc6, dIJ1, dIJ2}). The point $v_N(0):=v_N(0,m), m=1,\ldots,N,$ will be called the body of the spider. Consider a homogeneous discrete-time birth-death chain $\{S_n, n=0,1,\ldots\}$ on a spider $\mathbb{S}_N$. The transition probabilities are given by
$$
\mathbb{P}\left[S_{n+1}=v_N(0)\,|\, S_{n}=v_N(0)\right]=\alpha_0,\quad \mathbb{P}\left[S_{n+1}=v_N(1,m)\,|\, S_{n}=v_N(0)\right]=\alpha_m,\quad m=1,\ldots,N,
$$
where $\sum_{m=0}^N\alpha_m=1,$ and
\begin{align*}
\mathbb{P}\left[S_{n+1}=v_N(k+1,m)\,|\, S_{n}=v_N(k,m)\right]&=a_{k,m},\\
\mathbb{P}\left[S_{n+1}=v_N(k,m)\,|\, S_{n}=v_N(k,m)\right]&=b_{k,m},\\ 
\mathbb{P}\left[S_{n+1}=v_N(k-1,m)\,|\, S_{n}=v_N(k,m)\right]&=c_{k,m},
\end{align*}
where $a_{k,m}+b_{k,m}+c_{k,m}=1$ for all $k\geq1$ and $m=1,\ldots,N$. A diagram of the probability transitions between the states of this process is given in Figure \ref{fig1}.
\begin{figure}[h]
\includegraphics[scale=0.25]{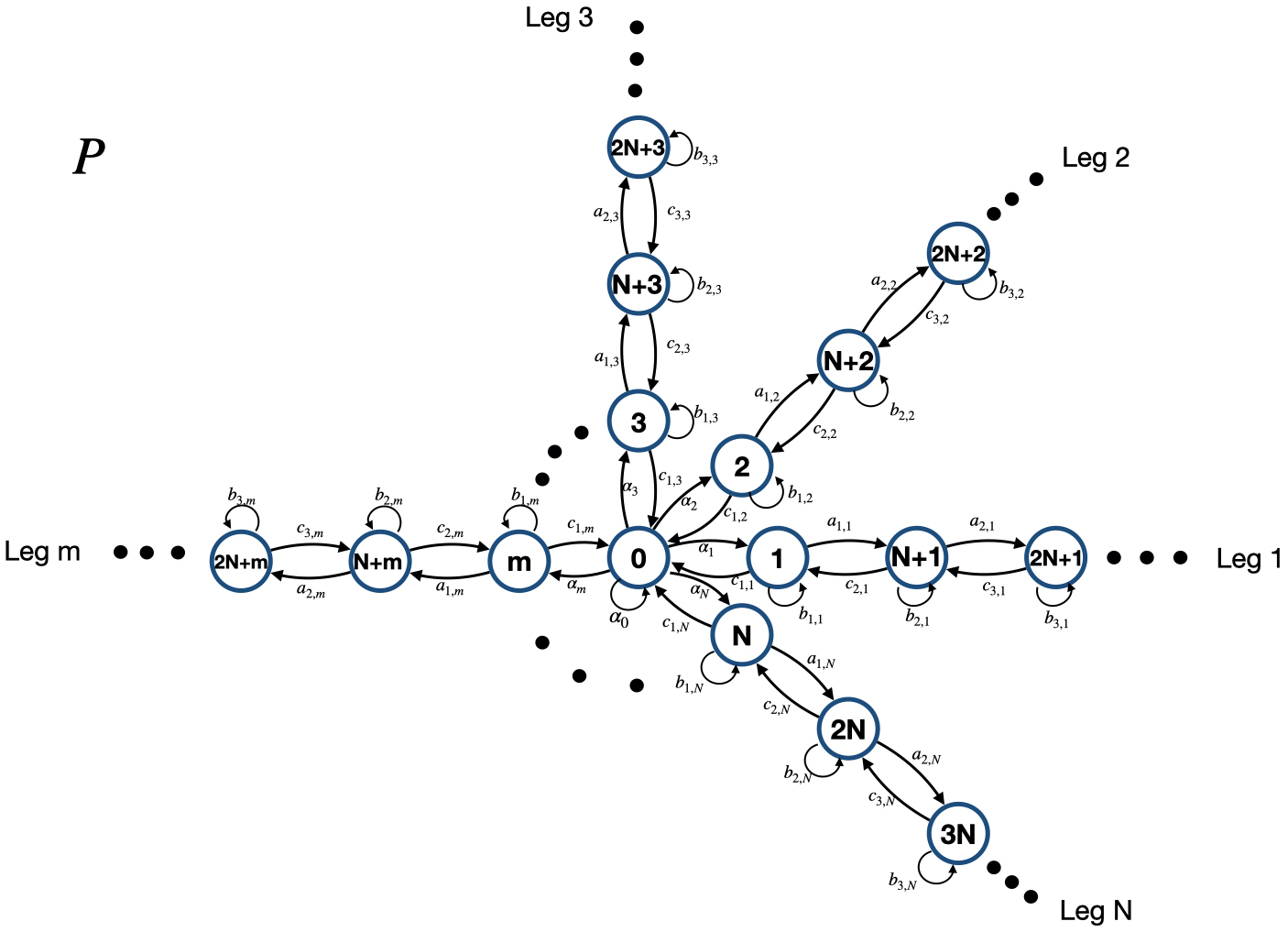}
\caption{Diagram of a birth-death chain on a spider with $N$ legs.}
\label{fig1}
\end{figure}
As it was done in \cite{DRSZ}, the birth-death chain $\{S_n, n=0,1,\ldots\}$ on a spider $\mathbb{S}_N$ can be seen as a quasi-birth-death process on the state space $\mathbb{N}_0\times\{1,2,\ldots,N\}$ (see \cite{LaR, Neu} for more information about quasi-birth-death processes). The labeling follows putting $v_N(0)$ as the origin $0$. Then the first $N$ nodes on the first circle as $1,\ldots,N,$ in a counter-clock wise fashion. The second circle with $N+1,\ldots,2N,$ and so on (see Figure \ref{fig1}). The transition probability matrix of the birth-death chain $\{S_n, n=0,1,\ldots\}$, seen as a quasi-birth-death process, is
\begin{equation}\label{tpmm}
\bm P=\begin{pmatrix}
\bm B_0&\bm A_0&&&\\
\bm C_1&\bm B_1&\bm A_1&&\\
&\bm C_2&\bm B_2&\bm A_2&\\
&&\ddots&\ddots&\ddots
\end{pmatrix},
\end{equation}
where
\begin{equation}\label{B0A0}
\bm B_0=\begin{pmatrix}
\alpha_0&\alpha_1&\alpha_2&\cdots&\alpha_{N-1}\\
c_{1,1}&b_{1,1}&0&\cdots&0\\
c_{1,2}&0&b_{1,2}&\cdots&0\\
\vdots&\vdots&\vdots&\ddots&\vdots\\
c_{1,N-1}&0&0&\cdots&b_{1,N-1}
\end{pmatrix},\quad \bm A_0=\mbox{diag}\left(\alpha_N,a_{1,1},\ldots,a_{1,N-1}\right),
\end{equation}
and $\bm A_n,\bm B_n,\bm C_n$ are the diagonal matrices
\begin{equation}\label{ABn}
\bm A_n=\mbox{diag}\left(a_{n,N},a_{n+1,1},\ldots,a_{n+1,N-1}\right), \quad\bm B_n=\mbox{diag}\left(b_{n,N},b_{n+1,1},\ldots,b_{n+1,N-1}\right),\quad n\geq1,
\end{equation}
\begin{equation}\label{Cn}
\bm C_n=\mbox{diag}\left(c_{n,N},c_{n+1,1},\ldots,c_{n+1,N-1}\right),\quad n\geq1.
\end{equation}
In the same fashion as in \cite{DRSZ, G2} we consider the matrix-valued polynomials $(\bm Q_n)_{n\geq0}$ generated by the three-term recurrence relation
\begin{align*}
x\bm Q_n(x)&=\bm A_n\bm Q_{n+1}(x)+\bm B_n\bm Q_n(x)+\bm C_n\bm Q_{n-1}(x),\quad n\geq0,\\
\bm Q_0(x)&=\bm I_N,\quad \bm Q_{-1}(x)=\bm 0,
\end{align*}
where $\bm I_N$ and $\bm0$ denote the identity and the null matrix of dimension $N\times N$, respectively (from now on whenever we write $\bm0$ we will mean the null vector or matrix which dimension will be determined by the context). These matrix-valued polynomials can be written as
\begin{equation}\label{polmat}
\bm Q_n(x)=\begin{pmatrix}
Q_{n,N}(x)&\alpha_1Q_{n,N}^{(0)}(x)&\alpha_2Q_{n,N}^{(0)}(x)&\cdots&\alpha_{N-1}Q_{n,N}^{(0)}(x)\\
Q_{n,1}^{(0)}(x)&Q_{n,1}(x)&0&\cdots&0\\
Q_{n,2}^{(0)}(x)&0&Q_{n,2}(x)&\cdots&0\\
\vdots&\vdots&\vdots&\ddots&\vdots\\
Q_{n,N-1}^{(0)}(x)&0&0&\cdots&Q_{n,N-1}(x)
\end{pmatrix},\quad n\geq0,
\end{equation}
where $Q_{n,N}(x)$ satisfies the scalar-valued three-term recurrence relation (here $a_{0,N}=\alpha_N$ and $b_{0,N}=\alpha_0$)
\begin{equation}\label{recrel1}
\begin{split}
xQ_{n,N}(x)&=a_{n,N}Q_{n+1,N}(x)+b_{n,N}Q_{n,N}(x)+c_{n,N}Q_{n-1,N}(x),\quad n\geq0,\\
Q_{0,N}(x)&=1,\quad Q_{-1,N}(x)=0,
\end{split}
\end{equation}
and $Q_{n,N}^{(0)}$ will denote the corresponding associated polynomials (or $0$-th associated polynomials). These are polynomials satisfying the same three-term recurrence relation \eqref{recrel1} but with initial conditions $Q_{0,N}^{(0)}=0,Q_{1,N}^{(0)}=-1/\alpha_N$. Also $Q_{n,k}(x),k=1,\ldots,N-1,$ satisfy the scalar-valued three-term recurrence relations
\begin{equation}\label{recrel2}
\begin{split}
xQ_{n,k}(x)&=a_{n+1,k}Q_{n+1,k}(x)+b_{n+1,N}Q_{n,k}(x)+c_{n+1,N}Q_{n-1,k}(x),\quad n\geq0,\\
Q_{0,k}(x)&=1,\quad Q_{-1,k}(x)=0,
\end{split}
\end{equation}
and $Q_{n,k}^{(0)}(x),k=1,\ldots,N-1,$ will denote the corresponding associated polynomials  with initial conditions $Q_{0,k}^{(0)}=0,Q_{1,k}^{(0)}=-c_{1,k}/a_{1,k},k=1,\ldots,N-1.$ Observe that the associated polynomials have degree $n-1$. Therefore the matrix-valued polynomials $(\bm Q_n)_{n\geq0}$ \eqref{polmat} satisfy $\deg(\bm Q_n)=n$ and have nonsingular leading coefficient. 

Let us now see that the discrete-time birth-death chain $\{S_n,n=0,1,\ldots\}$ defined on the spider $\mathbb{S}_N$ and with transition probability matrix $\bm P$ \eqref{tpmm} can be identified with a $N\times N$ \emph{weight matrix} $\bm W$ supported on the real line, i.e. a matrix of measures which is symmetric and nonnegative definite for any Borel set $A$, i.e. $\bm W(A)\geq0$, and with finite moments. The matrix-valued polynomials $(\bm Q_n)_{n\geq0}$ defined by \eqref{polmat} will be orthogonal with respect to $\bm W$ in the following sense
$$
\int_{-1}^1\bm Q_n(x)d\bm W(x)\bm Q_m^T(x)=\|\bm Q_n\|_{\bm W}^2\delta_{nm},
$$
as we will see in the following result.
\begin{proposition}\label{prop1}
Let $\{S_n,n=0,1,\ldots\}$ be a discrete-time birth-death chain on a spider $\mathbb{S}_N$ with transition probability matrix $\bm P$ \eqref{tpmm}. Then there exists a weight matrix $\bm W$ supported on the interval $[-1,1]$ such that the polynomials $(\bm Q_n)_{n\geq0}$ defined by \eqref{polmat} are orthogonal with respect to $\bm W$. 
\end{proposition}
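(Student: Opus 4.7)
The plan is to apply a matrix-valued Favard theorem via a symmetrization of $\bm{P}$, in the spirit of the general treatment of quasi-birth-death processes in \cite{DRSZ}. Concretely, I would seek positive-definite diagonal matrices $\bm{\Pi}_n$ (the potential coefficients of the chain, leg by leg) satisfying
$$
\bm{\Pi}_n\bm{A}_n=\bm{C}_{n+1}^T\bm{\Pi}_{n+1}\quad(n\geq 0),\qquad \bm{\Pi}_n\bm{B}_n=\bm{B}_n^T\bm{\Pi}_n\quad(n\geq 0).
$$
Once these exist, conjugation by the block-diagonal operator $\bm{\Pi}^{1/2}=\text{diag}(\bm{\Pi}_0^{1/2},\bm{\Pi}_1^{1/2},\ldots)$ turns $\bm{P}$ into a bounded self-adjoint operator on $\ell^2(\NN_0\times\{1,\ldots,N\})$; since $\bm{P}$ is stochastic, its spectrum lies in $[-1,1]$, and the spectral theorem then provides the desired weight matrix.

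First I would construct the $\bm{\Pi}_n$. For $n\geq 1$ the blocks $\bm{A}_n,\bm{B}_n,\bm{C}_n$ in \eqref{ABn}--\eqref{Cn} are diagonal, so $\bm{\Pi}_n\bm{B}_n=\bm{B}_n^T\bm{\Pi}_n$ is automatic and the coupling relation reduces to scalar recursions that determine $\bm{\Pi}_{n+1}$ from $\bm{\Pi}_n$ via the standard birth-death potential formulas applied leg by leg. The only delicate step is at level $0$: since $\bm{B}_0$ in \eqref{B0A0} has off-diagonal entries at the body, the condition $\bm{\Pi}_0\bm{B}_0=\bm{B}_0^T\bm{\Pi}_0$ imposes $\pi_0^{(1)}\alpha_j=c_{1,j}\pi_0^{(j+1)}$ for $j=1,\ldots,N-1$, which is solved (up to normalization) by
$$
\bm{\Pi}_0=\text{diag}\!\left(1,\;\frac{\alpha_1}{c_{1,1}},\;\ldots,\;\frac{\alpha_{N-1}}{c_{1,N-1}}\right),
$$
positive under the standing positivity of the spider transitions. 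Compatibility between levels $0$ and $1$ (via $\bm{A}_0$ and $\bm{C}_1$, both diagonal) then amounts to another scalar prescription fixing $\bm{\Pi}_1$.

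Having the $\bm{\Pi}_n$, the plan is to define $\bm{W}$ by restricting the spectral resolution of $\bm{\Pi}^{1/2}\bm{P}\bm{\Pi}^{-1/2}$ to the first-level basis vectors, normalized by $\bm{\Pi}_0$, so that $\bm{W}$ is an $N\times N$ symmetric positive-semidefinite matrix of measures on $[-1,1]$ with finite moments. Orthogonality of $(\bm{Q}_n)$ with respect to $\bm{W}$ is then standard: iterating the three-term recurrence \eqref{polmat} identifies $\bm{Q}_n(\bm{P})$ with the transfer from level $0$ to level $n$, and the orthogonality of distinct levels in the weighted $\ell^2$ translates directly into the required integral identity. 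The main obstacle is precisely the level-$0$ symmetrization described above, which is the one feature genuinely specific to the spider geometry (in contrast to a single half-line or a bi-infinite line); once it is resolved, the remainder of the argument is the symmetrizable block-tridiagonal Favard theorem and can be quoted from \cite{DRSZ}.
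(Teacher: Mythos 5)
Your proposal is correct and follows essentially the same route as the paper: the symmetrization conditions you impose on the $\bm\Pi_n$ are exactly conditions \eqref{condss} with $\bm\Pi_n=\bm T_n\bm T_n^T$, your level-$0$ solution $\bm\Pi_0=\mbox{diag}\left(1,\alpha_1/c_{1,1},\ldots,\alpha_{N-1}/c_{1,N-1}\right)$ coincides with \eqref{pi0}, and the leg-by-leg scalar recursions reproduce the potential coefficients $\pi_{n,m}$ of \eqref{Rn}. The only cosmetic difference is that the paper simply cites Theorems 2.1 and 2.5 of \cite{DRSZ} for the existence, orthogonality and $[-1,1]$ support, whereas you sketch the underlying spectral-theorem argument; the substance is the same.
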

\begin{proof}
For the existence and orthogonality we apply Theorem 2.1 of \cite{DRSZ}. We need to define a sequence of nonsingular matrices $(\bm T_n)_{n\geq0}$ such that
\begin{equation}\label{condss}
\begin{split}
\bm T_n\bm T_n^T\bm B_n&=\bm B_n^T\bm T_n\bm T_n^T,\quad n\geq0,\\
\bm T_n\bm T_n^T\bm A_n&=\bm C_{n+1}^T\bm T_{n+1}\bm T_{n+1}^T,\quad n\geq0,
\end{split}
\end{equation}
where the coefficients $(\bm A_n)_{n\geq0},(\bm B_n)_{n\geq0}$ and $(\bm C_n)_{n\geq1}$ are defined by \eqref{B0A0}, \eqref{ABn} and \eqref{Cn}, respectively. Let us define the following sequences
\begin{equation*}\label{piss}
\begin{split}
\pi_{0,N}&=1,\quad \pi_{n,N}=\alpha_N\frac{a_{1,N}\cdots a_{n-1,N}}{c_{1,N}\cdots c_{n,N}},\quad n\geq1,\\
\pi_{1,k}&=\frac{\alpha_k}{c_{1,k}},\quad \pi_{n+1,k}=\alpha_k\frac{a_{1,k}\cdots a_{n,k}}{c_{1,k}\cdots c_{n+1,k}},\quad n\geq1,\quad k=1,\ldots,N-1.
\end{split}
\end{equation*}
Then a straightforward computation shows that the diagonal matrices 
\begin{equation}\label{Rn}
\bm T_n=\mbox{diag}\left(\sqrt{\pi_{n,N}},\sqrt{\pi_{n+1,1}},\ldots,\sqrt{\pi_{n+1,N-1}}\right),\quad n\geq0,
\end{equation}
satisfy the conditions \eqref{condss}. Finally the weight matrix $\bm W$ is supported on the interval $[-1,1]$ as a consequence of Theorem 2.5 of \cite{DRSZ}.
\end{proof}
\begin{remark}\label{rem22}
From \eqref{Rn} we can define the sequence of matrix-valued \emph{potential coefficients} for the birth-death chain $\{S_n,n=0,1,\ldots\}$ on a spider $\mathbb{S}_N$ as
\begin{equation}\label{Pinn}
\bm \Pi_n=\bm T_n\bm T_n^T=\mbox{diag}\left(\pi_{n,N},\pi_{n+1,1},\ldots,\pi_{n+1,N-1}\right),\quad n\geq0.
\end{equation}
From formula (2.6) of \cite{dI1} we have that this sequence can be identified with the inverse of the norms of the matrix-valued orthogonal polynomials $(\bm Q_n)_{n\geq0}$ defined by \eqref{polmat}, i.e.
\begin{equation*}
\bm\Pi_n=\left(\|\bm Q_n\|_{\bm W}^2\right)^{-1}=\left(\int_{-1}^1\bm Q_n(x)d\bm W(x)\bm Q_n^T(x)\right)^{-1},\quad n\geq0.
\end{equation*}
\end{remark}

\begin{remark}\label{rem23}
The existence of a weight matrix $\bm W$ for the birth-death chain $\{S_n,n=0,1,\ldots\}$ on a spider $\mathbb{S}_N$ gives one way of computing the $(i,j)$-block of the $n$-step transition probability matrix $\bm P^n$ by the so-called \emph{Karlin-McGregor formula}. Indeed, using Theorem 2.3 of \cite{DRSZ} we have that
\begin{equation*}
\bm P_{ij}^n=\left(\int_{-1}^1x^n\bm Q_i(x)d\bm W(x)\bm Q_j^T(x)\right)\bm\Pi_j,\quad i,j\in\mathbb{N}_0,
\end{equation*}
where $(\bm Q_n)_{n\geq0}$ and $(\bm\Pi_n)_{n\geq0}$ are defined by \eqref{polmat} and \eqref{Pinn}, respectively. Also, in \cite{DRSZ}, one can find some probabilistic results concerning \emph{recurrence} or \emph{canonical moments} of quasi-birth-death processes in terms of the spectral matrix $\bm W$. In particular, from Corollary 4.1 of \cite{DRSZ}, we have that the birth-death chain on a spider is recurrent if and only if 
\begin{equation}\label{condrec}
e_j^T\left(\int_{-1}^1\frac{d\bm W(x)}{1-x}\right)\bm T_0^{-1}e_j=\infty, 
\end{equation}
for some $j=1,\ldots,N$, where $e_j=(0,\ldots,1,\ldots,0)^T$ denotes the $j$-th unit vector in $\mathbb{R}^N$. And from Corollary 4.2 of \cite{DRSZ} we have that the birth-death chain on a spider is positive recurrent if and only if one of the measures $e_j^Td\bm W(x)\bm T_0^{-1}e_j, j=1,\ldots, N,$ has a jump at the point 1. 
\end{remark}

The scalar-valued polynomials $(Q_{n,k})_{n\geq0}$, $k=1,\ldots,N,$ and the corresponding associated polynomials $(Q_{n,k}^{(0)})_{n\geq0}$, $k=1,\ldots,N,$ are defined in terms of regular three-term recurrence relations (see \eqref{recrel1} and \eqref{recrel2}). This means, using Favard's theorem or the spectral theorem for orthogonal polynomials, that there exist spectral measures supported on the interval $[-1,1]$ (the corresponding Jacobi matrices are stochastic) such that these polynomials are orthogonal (see \cite{KMc6}). For $k=1,\ldots,N,$ let us denote $\omega_k$ and $\omega_k^{(0)}$ the spectral probability measures associated with the polynomials $(Q_{n,k})_{n\geq0}$ and $(Q_{n,k}^{(0)})_{n\geq0}$, respectively. For any measure $\omega$ supported on the real line let us define the \emph{Stieltjes transform} of $\omega$ by
\begin{equation}\label{Sttrdef}
B(z;\omega)=\int_{\mathbb{R}}\frac{d\omega(x)}{x-z},\quad z\in\mathbb{C}\setminus\mathbb{R}.
\end{equation} 
There is a very well-known connection between the Stieltjes transforms of $\omega_k$ and $\omega_k^{(0)}, k=1,\ldots,N$, which can be found for instance in formula (6) of \cite{KMc6}. These formulas are given by
%\begin{equation}\label{bzomerel}
%B(z;\omega_k)=-\frac{1}{z-b_{1,k}+a_{1,k}c_{2,k}B(z;\omega_k^{(0)})},\; k=1,\ldots,N-1,\; B(z;\omega_N)=-\frac{1}{z-\alpha_0+\alpha_Nc_{1,N}B(z;\omega_N^{(0)})}.
%\end{equation}
\begin{equation}\label{bzomerel}
\begin{split}
B(z;\omega_N)&=-\frac{1}{z-\alpha_0+\alpha_Nc_{1,N}B(z;\omega_N^{(0)})},\\
B(z;\omega_k)&=-\frac{1}{z-b_{1,k}+a_{1,k}c_{2,k}B(z;\omega_k^{(0)})},\quad k=1,\ldots,N-1.
\end{split}
\end{equation}
From Proposition \ref{prop1} we know that any birth-death chain $\{S_n,n=0,1,\ldots\}$ on a spider $\mathbb{S}_N$ can be identified with some weight matrix $\bm W$. Let us give one criterium to compute the Stieltjes transform of $\bm W$ (entry by entry) in terms of the Stieltjes transforms of the measures $\omega_k,k=1,\ldots,N,$ associated with the polynomials $(Q_{n,k})_{n\geq0},k=1,\ldots,N$. For that we will need the following notation
\begin{equation*}\label{aaass}
\vec{\bm\alpha}=\left(\alpha_1,\ldots,\alpha_{N-1}\right)^T, \quad \bm\alpha_D=\mbox{diag}\left(\alpha_1,\alpha_2,\ldots,\alpha_{N-1}\right),
\end{equation*}
\begin{equation}\label{ccss}
\vec{\bm c}=\left(c_{1,1},\ldots,c_{1,N-1}\right)^T, \quad \bm c_D=\mbox{diag}\left(c_{1,1},c_{1,2},\ldots,c_{1,N-1}\right),
\end{equation}
\begin{equation}
\vec{\bm\omega}(x)=\left(\omega_1(x),\ldots,\omega_{N-1}(x)\right)^T,\quad \bm\omega_D(x)=\mbox{diag}\left(\omega_1(x),\omega_2(x),\ldots,\omega_{N-1}(x)\right).
\end{equation}
Whenever we write $B(z;\vec{\bm\omega})$ or $B(z;\bm\omega_D)$ we mean that we are taking the Stieltjes transform on each component/entry.

\begin{proposition}\label{prop2}
Let $\{S_n,n=0,1,\ldots\}$ be a birth-death chain on a spider $\mathbb{S}_N$ with transition probability matrix $\bm P$ \eqref{tpmm}. The Stieltjes transform of the weight matrix $\bm W$ obtained in Proposition \ref{prop1} can be written as
\begin{equation}\label{Sttgen}
B(z;\bm W)=\left(
\begin{array}{c|c}
0&\bm 0\\
\hline
\bm 0&-B(z;\bm\omega_D)\bm c_D\bm\alpha_D^{-1}
\end{array}
\right)+\mathfrak{b}(z)\left(
\begin{array}{c|c}
1&-\vec{\bm c}\,^TB(z;\bm\omega_D)\\
\hline
-B(z;\bm\omega_D)\vec{\bm c}&B(z;\bm\omega_D)\vec{\bm c}\,\vec{\bm c}\,^TB(z;\bm\omega_D)
\end{array}
\right),
\end{equation}
where
\begin{equation}\label{B11z}
\mathfrak{b}(z)=\frac{1}{\D\frac{1}{B(z;\omega_N)}-\vec{\bm\alpha}^TB(z;\bm\omega_D)\vec{\bm c}}.
\end{equation}
\end{proposition}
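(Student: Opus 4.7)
The plan is to realize $B(z;\bm W)$ as a Schur complement of the resolvent of $\bm P$. Writing $1/(x-z)=-\sum_{n\ge0}x^nz^{-n-1}$ inside the integral and invoking the Karlin--McGregor identity of Remark \ref{rem23} at $i=j=0$ (using $\bm Q_0=\bm I_N$), which gives $\int x^n\,d\bm W=\bm P_{00}^n\,\bm\Pi_0^{-1}$, one obtains
\[
B(z;\bm W)=\bigl((\bm P-z\bm I)^{-1}\bigr)_{00}\,\bm\Pi_0^{-1},
\]
so it suffices to compute the top-left $N\times N$ block of the resolvent and then right-multiply by the diagonal matrix $\bm\Pi_0^{-1}=\mathrm{diag}(1,c_{1,1}/\alpha_1,\dots,c_{1,N-1}/\alpha_{N-1})$.

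The key structural observation is that $\bm A_n,\bm B_n,\bm C_n$ are diagonal for $n\ge1$, so deleting the body state $0$ disconnects the Markov chain into $N$ independent substochastic birth--death sub-chains, one per leg. The scalar Green's function of the restricted chain at the first state of leg $k$ is $B(z;\omega_k)$ for $k=1,\dots,N-1$ (immediate from \eqref{recrel2}) and $B(z;\omega_N^{(0)})$ for leg $N$ (after rescaling the shifted sequence $-\alpha_N^{-1}Q_{n+1,N}^{(0)}$ to a monic-at-zero solution of the corresponding three-term recurrence). Applying the standard scalar $2\times2$ block-inversion formula to $\bm P-z\bm I$ with the singleton $\{0\}$ as the first block then expresses the body entry of the resolvent as
\[
\Bigl(\alpha_0-z-\sum_{k=1}^{N-1}\alpha_k c_{1,k}B(z;\omega_k)-\alpha_N c_{1,N}B(z;\omega_N^{(0)})\Bigr)^{-1},
\]
and the three remaining blocks as rank-one expressions built from $\vec{\bm\alpha}$, $\vec{\bm c}$ and the diagonal matrix $B(z;\bm\omega_D)$; this rank-one form arises because $\vec{\bm\alpha}$ and $\vec{\bm c}$ couple only to the first state of each leg and because the restricted resolvent is block-diagonal across legs.

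To close the computation I would eliminate the asymmetric $B(z;\omega_N^{(0)})$ term using the scalar relation \eqref{bzomerel} for $\omega_N$, which converts $\alpha_0-z-\alpha_N c_{1,N}B(z;\omega_N^{(0)})$ into $1/B(z;\omega_N)$ and identifies the body entry with the $\mathfrak{b}(z)$ of \eqref{B11z}. Right-multiplying the assembled $N\times N$ block by $\bm\Pi_0^{-1}$ then absorbs the $\alpha_j$-weights inherited from the first row of $\bm A_0$ into the $c_{1,j}$-weights carried by $\vec{\bm c}$, producing the rank-one-plus-diagonal decomposition \eqref{Sttgen}. The main technical point is precisely the asymmetric role of leg $N$: because its first leg-state lies in block $1$ rather than in block $0$, the Schur calculation naturally produces $\omega_N^{(0)}$ rather than $\omega_N$, and the identity \eqref{bzomerel} is indispensable in order to cast the final answer in the clean symmetric form involving $1/B(z;\omega_N)-\vec{\bm\alpha}^TB(z;\bm\omega_D)\vec{\bm c}$.
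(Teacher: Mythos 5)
Your proposal is correct in substance, but it takes a genuinely different route from the paper's proof. The paper never touches the resolvent of $\bm P$ directly: it quotes Theorem 2.1 of \cite{Clay}, which gives $B(z;\bm W)\bm\Pi_0=-\left[z\bm I_N-\bm B_0+\bm A_0B(z;\bm W^{(0)})\bm C_1\right]^{-1}$ in terms of the spectral matrix $\bm W^{(0)}$ of the process obtained by deleting the \emph{entire first block} row and column of $\bm P$; since the higher blocks are diagonal, $B(z;\bm W^{(0)})$ is diagonal with entries $B(z;\omega_N^{(0)}),B(z;\omega_1^{(0)}),\ldots,B(z;\omega_{N-1}^{(0)})$, and then \emph{both} relations in \eqref{bzomerel} are used (for leg $N$ and for every leg $k<N$) before the $2\times2$ block-inverse formula yields \eqref{Sttgen}. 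You instead pivot on the single body state: you derive $B(z;\bm W)=\bigl((\bm P-z\bm I)^{-1}\bigr)_{00}\bm\Pi_0^{-1}$ from the Karlin--McGregor formula of Remark \ref{rem23} plus a Neumann series, and then Schur-complement the state $0$ out of the scalar matrix, so the legs decouple, the measures $\omega_k$, $k=1,\ldots,N-1$, appear directly as Green's functions of the stripped legs, and \eqref{bzomerel} is needed only once, for the asymmetric leg $N$ (a point you correctly single out). What your route buys is independence from \cite{Clay} and a transparent probabilistic picture; what the paper's route buys is that the operator-theoretic bookkeeping is delegated to Clayton's theorem, whereas you still owe two small justifications: your resolvent identity is obtained termwise only for $|z|>1$ and must be extended to $z\in\CC\setminus[-1,1]$ by analyticity of both sides, and the Schur-complement step is performed on an infinite matrix, so you should note that $\bm P-z\bm I$ and the leg-restricted operators are boundedly invertible there (they are symmetrizable with spectrum in $[-1,1]$ on the weighted $\ell^2$ space built from the potential coefficients) and that the $(1,1)$ resolvent entry of each stripped leg equals $B(z;\omega_k)$, resp.\ $B(z;\omega_N^{(0)})$ for leg $N$. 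One practical remark for when you assemble the four blocks and multiply by $\bm\Pi_0^{-1}$: a careful run of either computation produces $+B(z;\bm\omega_D)\bm c_D\bm\alpha_D^{-1}$ in the lower-right corner of the first matrix of \eqref{Sttgen} --- this is also what the example of Section \ref{sec4} exhibits and what the normalization $\int_{-1}^1 d\bm W=\bm\Pi_0^{-1}$ forces as $z\to\infty$ --- so the minus sign printed there is a typo and you should not distort your computation to match it.
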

\begin{proof}
We will use Theorem 2.1 of \cite{Clay}, which gives a relation between the Stieltjes transform of $\bm W$ and the Stieltjes transform of the spectral matrix $\bm W^{(0)}$ of the $0$-th associated process (built from the original one by eliminating the first block row and column of $\bm P$ \eqref{tpmm}). This relation is given by
\begin{equation*}\label{relST}
B(z;\bm W)\bm\Pi_0=-\left[z\bm I_N-\bm B_0+\bm A_0B(z;\bm W^{(0)})\bm\Pi_0^{(0)}\bm C_1\right]^{-1},
\end{equation*}
where $\bm\Pi_0^{(0)}=\bm I_N$ and (see \eqref{Pinn})
\begin{equation}\label{pi0}
\bm \Pi_0=\left(
\begin{array}{c|c}
1&\\
\hline
&\bm\alpha_D\bm c_D^{-1}
\end{array}
\right).
\end{equation}
Since $\bm A_n, \bm B_n, \bm C_{n+1}, n\geq1,$ are diagonal matrices we have that $B(z;\bm W^{(0)})$ is a diagonal matrix given by
$$
B(z;\bm W^{(0)})=\mbox{diag}\left(B(z;\omega_N^{(0)}),B(z;\omega_1^{(0)}),\ldots,B(z;\omega_{N-1}^{(0)})\right).
$$
Using the definition of $\bm B_0,\bm A_0$ and $\bm C_1$ in \eqref{B0A0} and \eqref{Cn} we can write the Stieltjes transform of $\bm W$ in a $2\times2$ block matrix expression
\begin{equation}\label{Bzw}
B(z;\bm W)=-\left(
\begin{array}{c|c}
M_{11}&M_{12}\\
\hline
M_{21}&M_{22}
\end{array}
\right)^{-1}\left(
\begin{array}{c|c}
1&\\
\hline
&\bm c_D\bm\alpha_D^{-1}
\end{array}
\right),
\end{equation}
where (see \eqref{bzomerel})
\begin{align*}
M_{11}&=z-\alpha_0+\alpha_Nc_{1,N}B(z;\omega_N^{(0)})=-\frac{1}{B(z;\omega_N)},\quad M_{12}=-\vec{\bm\alpha}^T,\quad M_{21}=-\vec{\bm c},\\
M_{22}&=\mbox{diag}\left(z-b_{1,1}+a_{1,1}c_{2,1}B(z;\omega_1^{(0)}),\ldots,z-b_{1,N-1}+a_{1,N-1}c_{2,N-1}B(z;\omega_{N-1}^{(0)})\right)=-B(z;\bm\omega_D)^{-1}.
\end{align*}
Using the well-known formula for the inverse of a $2\times2$ block matrix
$$
\left(
\begin{array}{c|c}
A&B\\
\hline
C&D
\end{array}
\right)^{-1}
=\left(
\begin{array}{c|c}
(A-BD^{-1}C)^{-1}&-(A-BD^{-1}C)^{-1}BD^{-1}\\
\hline
-D^{-1}C(A-BD^{-1}C)^{-1}&D^{-1}+D^{-1}C(A-BD^{-1}C)^{-1}BD^{-1}
\end{array}
\right),
$$
in \eqref{Bzw} and the fact that $\vec{\bm\alpha}^T\bm c_D\bm\alpha_D^{-1}=\vec{\bm c}\,^T$, we get \eqref{Sttgen} after some straightforward computations.

\end{proof}

%%%%%%%%%%%%%%%%%%%%%
%%%%%%%%%%%%%%%%%%%%%
%%%%%%%%%%%%%%%%%%%%%
%%%%%%%%%%%%%%%%%%%%%

\section{Reflecting-absorbing factorization for birth-death chains on a spider}\label{sec3}

In this section we decompose the discrete-time birth-death chain $\{S_n,n=0,1,\ldots\}$ on a spider $\mathbb{S}_N$ described by $\bm P$ \eqref{tpmm} into two independent processes: the first one is a \textit{reflecting} process from state 0 and the second one is an \textit{absorbing} process to the state 0. Therefore we are looking for a factorization of the form $\bm P=\bm P_R\bm P_A$, where
\begin{equation}\label{PRA}
\bm P_R=\left(
\begin{array}{ccccc}
\bm Y_0&\bm X_0&\\
&\bm Y_{1}&\bm X_{1}&\\
&&\ddots&\ddots
\end{array}
\right), \quad 
\bm P_A=\left(
\begin{array}{cccc}
\bm S_0&\\
\bm R_{1}&\bm S_{1}&\\
&\ddots&\ddots
\end{array}
\right),
\end{equation}
with blocks given by 
\begin{equation}\label{yyss}
\bm Y_0=\left(
\begin{array}{ccccc}
\beta_0&\beta_1&\beta_2&\cdots&\beta_{N-1}\\
0&y_{1,1}&0&\cdots&0\\
0&0 &y_{1,2}&\cdots&0\\
\vdots&\vdots&\vdots&\ddots&\vdots\\
0&0&0&\cdots&y_{1,N-1}\\
\end{array}
\right),\quad \bm S_0=\left(
\begin{array}{ccccc}
1&0&0&\cdots&0\\
r_{1,1}&s_{1,1}&0&\cdots&0\\
r_{1,2}&0 &s_{1,2}&\cdots&0\\
\vdots&\vdots&\vdots&\ddots&\vdots\\
r_{1,N-1}&0&0&\cdots&s_{1,N-1}\\
\end{array}
\right),
\end{equation}
\begin{equation}\label{ssss}
\bm Y_n=\mbox{diag}\left(y_{n,N}, y_{n+1,1},\ldots, y_{n+1,N-1}\right),\quad
\bm S_n=\mbox{diag}\left(s_{n,N}, s_{n+1,1},\ldots, s_{n+1,N-1}\right),\quad n\geq1,
\end{equation}
\begin{equation*}\label{xxss}
\bm X_0=\mbox{diag}\left(\beta_N, x_{1,1},\ldots, x_{1,N-1}\right),\quad 
\bm X_n=\mbox{diag}\left(x_{n,N}, x_{n+1,1},\ldots, x_{n+1,N-1}\right),\quad n\geq1,
\end{equation*}
\begin{equation}\label{rrss}
\bm R_n=\mbox{diag}\left(r_{n,N}, r_{n+1,1},\ldots, r_{n+1,N-1}\right),\quad n\geq1,
\end{equation}
with the conditions that all these matrices are stochastic, i.e. $\sum_{k=0}^{N}\beta_k=1$ and
\begin{align}
x_{n,m}+y_{n,m}&=1, \quad n\ge 1,\quad m=1, 2, \dots, N, \label{xy}\\
r_{n,m}+s_{n,m}&=1, \quad n\ge 1, \quad m=1, 2. \dots, N.\label{rs}
\end{align}
Diagrams of the possible transitions between the states of both birth-death chains are given in Figure \ref{fig2}.
\begin{figure}[h]
\includegraphics[scale=0.25]{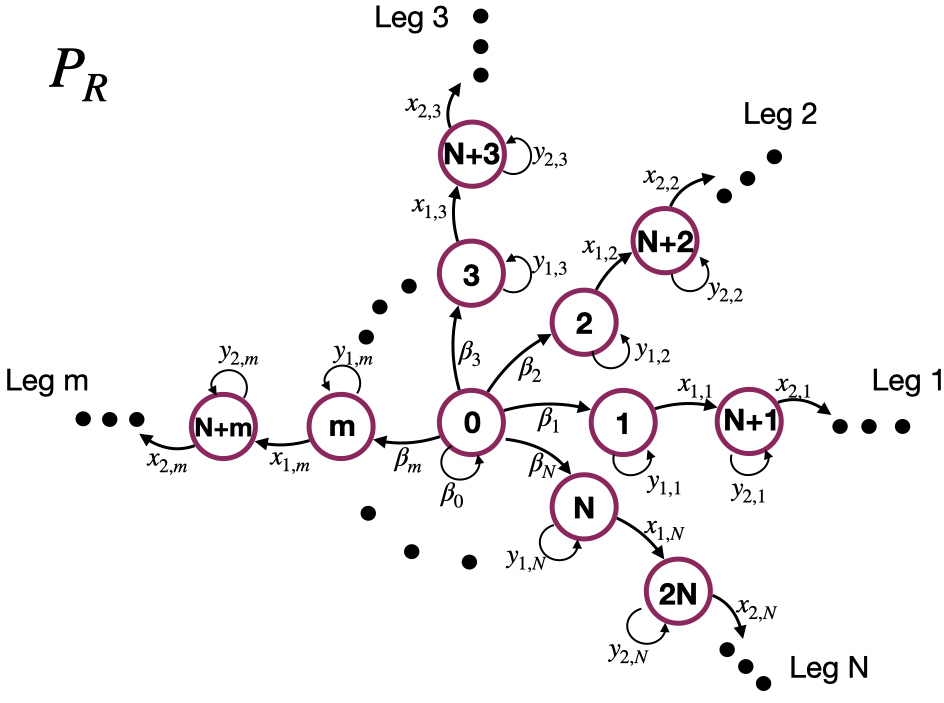}
\includegraphics[scale=0.25]{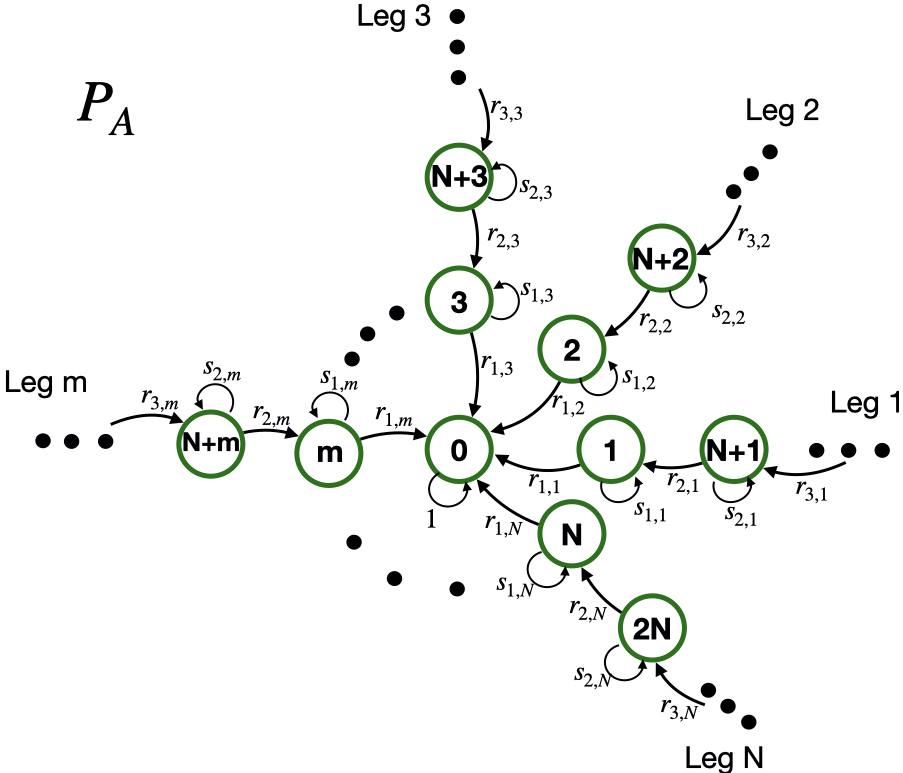}
\caption{Diagrams of the possible transitions between the states of the reflecting process given by $\bm P_R$ and the absorbing process given by $\bm P_A$.}
\label{fig2}
\end{figure}
Observe that the reflecting-absorbing factorization $\bm P=\bm P_R\bm P_A$ is just a particular case of a block UL factorization (but not the only one). A simple computation from $\bm P=\bm P_R\bm P_A$ gives the following relations
\begin{equation}\label{ABCXYRS}
\begin{split}
\bm A_n&=\bm X_n\bm S_{n+1}, \quad n\ge 0,\\
\bm B_n&=\bm X_n\bm R_{n+1}+\bm Y_n\bm S_n, \quad n\ge 0,\\ 
\bm C_n&=\bm Y_n\bm R_n, \quad n\ge 1.
\end{split}
\end{equation}
If we look entry by entry these relations can also be written as
\begin{align}
\alpha_0&=\beta_0+\sum_{k=1}^{N}\beta_k r_{1,k}, \nonumber\\
\alpha_m&=\beta_m s_{1,m}, \quad m=1, 2, \dots, N,\label{alpha11}\\
a_{n,m}&=x_{n,m}s_{n+1,m},\quad n\ge 1, \quad  m=1, 2, \dots, N, \label{aaD}\\
b_{n,m}&=y_{n,m}s_{n,m}+x_{n,m}r_{n+1,m},\quad n\ge 1, \quad  m=1, 2, \dots, N, 
\nonumber\\
c_{n,m}&=y_{n,m}r_{n,m}, \quad n\ge 1, \quad m=1, 2, \dots, N.\label{ccD}
\end{align}
We can compute all the coefficients $x_{n,m},y_{n,m},r_{n,m},s_{n,m},$ in terms of $N$ free parameters $\beta_1,\ldots,\beta_N$, one for each leg. Indeed, if we fix $\beta_m$ for $m = 1, 2, \dots, N,$ we get $s_{1, m}$, for $m= 1, 2, \dots, N,$ from equation \eqref{alpha11} and $r_{1, m}$, for $m= 1, 2, \dots, N,$ from equation \eqref{rs}. After this we get $y_{1, m}$, for $m= 1, 2, \dots, N,$ from equation \eqref{ccD} and $x_{1, m}$, for $m= 1, 2, \dots, N,$ from equation \eqref{xy}. Then we get $s_{2, m}$, for $m= 1, 2, \dots, N$ from equation \eqref{aaD} and so on using the same equations. %Alternatively, we can compute first $\bm S_0,\bm Y_0,\bm X_0,\bm S_1,\bm R_1,\bm Y_1,\bm X_1,$ from the description given above and then, using the fact that $\bm S_n,\bm R_n,\bm X_n,\bm Y_n,n\geq2,$ are diagonal matrices, we can get these coefficients recurrently (see the first and third equation in \eqref{ABCXYRS}) as
%\begin{align*}
%\bm S_{n+1}&=\left(\bm I_N-\bm C_n\left(\bm I_N-\bm S_n\right)^{-1}\right)^{-1}\bm A_n,\quad \bm R_{n+1}=\bm I_N-\bm S_{n+1},\quad n\geq1,\\
%\bm Y_{n+1}&=\bm C_{n+1}\left(\bm I_N-\left(\bm I_N-\bm Y_n\right)^{-1}\bm A_n\right)^{-1},\quad \bm X_{n+1}=\bm I_N-\bm X_{n+1},\quad n\geq1.
%\end{align*}

In the same fashion as in \cite{GdI3, dIJ1, dIJ2}, let us now see under what conditions we can guarantee a stochastic reflecting-absorbing factorization. Let 
\begin{equation}\label{Hcf}
H_m=\cFrac{\alpha_m}{1}-\cFrac{c_{1,m}}{1}-\cFrac{a_{1,m}}{1}-\cFrac{c_{2,m}}{1}-\cdots,\quad m=1, 2, \ldots, N,
\end{equation}
be the continued fraction with sequence of convergents given by
\begin{equation}\label{conv}
h_{n,m}=\frac{A_{n,m}}{B_{n,m}},\quad n\geq0,\quad m=1,\ldots,N.
\end{equation}
The sequences $(A_{n,m})_{n\geq0}$ and $(B_{n,m})_{n\geq0}$ for every  $m=1,\ldots,N,$ can be recursively obtained using the formulas
\begin{equation*}
\begin{split}
A_{2n,m}&=A_{2n-1,m}-c_{n,m}A_{2n-2,m},\quad n\geq1,\quad A_{2n+1,m}=A_{2n,m}-a_{n,m}A_{2n-1,m},\quad n\geq0,\\
A_{-1,m}&=-1, \quad A_{0,m}=0,
\end{split}
\end{equation*}
\begin{equation*}
\begin{split}
B_{2n,m}&=B_{2n-1,m}-c_{n,m}B_{2n-2,m},\quad n\geq1,\quad B_{2n+1,m}=B_{2n,m}-a_{n,m}B_{2n-1,m},\quad n\geq0,\\
B_{-1,m}&=0, \quad B_{0,m}=1,
\end{split}
\end{equation*}
where here we are calling $a_{0,m}=\alpha_m$.

\begin{theorem}\label{thm1}
Let $H_m$, $m=1, 2, \dots, N$, be the continued fractions defined by \eqref{Hcf} with their corresponding sequences of convergents \eqref{conv}. Assume that 
$$0<A_{n,m}<B_{n,m},\quad n\geq0,\quad m=1,\ldots,N.$$
Then the continued fractions $H_m$, $m=1, 2, \dots, N,$ are all convergent. Moreover, let $\bm P=\bm P_R\bm P_A$ and assume that $\sum_{m=1}^{N}H_m<1$. Then $\bm P_R$ and $\bm P_A$ are stochastic matrices if and only if 
\begin{equation}\label{betarange}
\beta_m\ge H_m,\quad m=1, 2, \dots, N.
\end{equation}
\end{theorem}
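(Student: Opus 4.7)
The strategy is to separate the convergence claim from the biconditional, and within the biconditional to reduce the analysis of the full factorization to an independent forward iteration on each leg.

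For convergence of $H_m$, the hypothesis $0<A_{n,m}<B_{n,m}$ places every convergent $h_{n,m}=A_{n,m}/B_{n,m}$ in $(0,1)$. A short induction on the two-term recursions for $A_{n,m}, B_{n,m}$ (together with the convention $a_{0,m}=\alpha_m$) yields the determinant identity
\[
A_{n+1,m}B_{n,m}-A_{n,m}B_{n+1,m}=\alpha_m\, c_{1,m}\, a_{1,m}\, c_{2,m}\cdots >0,
\]
a product of exactly $n+1$ strictly positive factors. Consequently $h_{n+1,m}>h_{n,m}$, and a bounded increasing sequence converges, so $H_m=\lim_n h_{n,m}$ exists.

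For the biconditional, fix one leg $m$. Reading \eqref{alpha11}, \eqref{rs}, \eqref{ccD}, \eqref{xy}, \eqref{aaD} in that order, the parameter $\beta_m$ determines the entries of $\bm{P}_R$ and $\bm{P}_A$ on leg $m$ by the deterministic iteration
\[
s_{1,m}=\frac{\alpha_m}{\beta_m},\quad r_{k,m}=1-s_{k,m},\quad y_{k,m}=\frac{c_{k,m}}{r_{k,m}},\quad x_{k,m}=1-y_{k,m},\quad s_{k+1,m}=\frac{a_{k,m}}{x_{k,m}}.
\]
Using $y_{k,m}r_{k,m}=c_{k,m}$ and $x_{k,m}s_{k+1,m}=a_{k,m}$ gives the cascade $r_{k,m}-c_{k,m}=r_{k,m}x_{k,m}$ and $x_{k,m}-a_{k,m}=x_{k,m}r_{k+1,m}$, so each upper-bound requirement ($y_{k,m}\leq 1$ and $s_{k+1,m}\leq 1$) collapses to the non-negativity of the next iterate. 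Hence stochasticity of $\bm{P}_R,\bm{P}_A$ on leg $m$ reduces to $r_{k,m}\geq 0$ and $x_{k,m}\geq 0$ for every $k\geq 1$.

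The key algebraic lemma, proved by induction on $k$ by substituting the iteration above into the two-term recursion defining $A_{n,m}, B_{n,m}$, is
\[
r_{k,m}=\frac{B_{2k-1,m}\beta_m-A_{2k-1,m}}{\Delta^{(r)}_{k,m}},\qquad x_{k,m}=\frac{B_{2k,m}\beta_m-A_{2k,m}}{\Delta^{(x)}_{k,m}},
\]
with strictly positive denominators $\Delta^{(r)}_{k,m}, \Delta^{(x)}_{k,m}$ given by the running products of the previous $r_{j,m}$ and $x_{j,m}$. Thus $r_{k,m}\geq 0 \Leftrightarrow \beta_m\geq h_{2k-1,m}$ and $x_{k,m}\geq 0 \Leftrightarrow \beta_m\geq h_{2k,m}$; letting $n\to\infty$ and using the monotone convergence from the second paragraph yields the equivalence with $\beta_m\geq H_m$. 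The assumption $\sum_m H_m<1$ is exactly what makes this compatible with the row-sum constraint $\beta_0=1-\sum_{m=1}^N\beta_m\geq 0$. The main obstacle is the inductive identification of $r_{k,m}, x_{k,m}$ with the linear forms $B_{n,m}\beta_m-A_{n,m}$; once that alignment with the $A_n, B_n$ recursion is established, the rest of the argument is essentially bookkeeping.
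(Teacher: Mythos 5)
Your proposal is correct and follows essentially the same route as the paper: convergence via the determinant identity $A_{n+1,m}B_{n,m}-A_{n,m}B_{n+1,m}>0$ giving bounded, strictly increasing convergents, and the biconditional via the forward substitution on each leg showing stochasticity is equivalent to the cascade $\beta_m>h_{n,m}$ for all $n$. Your explicit inductive identification $r_{k,m}=(B_{2k-1,m}\beta_m-A_{2k-1,m})/\Delta^{(r)}_{k,m}$, $x_{k,m}=(B_{2k,m}\beta_m-A_{2k,m})/\Delta^{(x)}_{k,m}$ simply makes precise the strong-induction step that the paper states informally and delegates to Proposition 2.1 of \cite{dIJ2}.
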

\begin{proof}
For $m= 1, 2, \dots, N$, it is not hard to proof that 
\begin{equation*}\label{diff}
\begin{split}
A_{2n,m}B_{2n+1,m}-B_{2n,n}A_{2n+1,m}&=-\alpha_mc_{1,m}a_{1,m}\cdots a_{n,m},\quad n\geq0,\\
A_{2n+1,m}B_{2n+2,m}-B_{2n+1,m}A_{2n+2,m}&=-\alpha_mc_{1,m}a_{1,m}\cdots c_{n+1,m},\quad n\geq0,
\end{split}
\end{equation*}
and consequently
\begin{equation*}
\begin{split}
h_{2n,m}-h_{2n+1,m}&=\frac{A_{2n,m}}{B_{2n,m}}-\frac{A_{2n+1,m}}{B_{2n+1,m}}=-\frac{\alpha_mc_{1,m}a_{1,m}\cdots a_{n,m}}{B_{2n,m}B_{2n+1,m}}<0, \quad n\ge 0,\\
h_{2n+1,m}-h_{2n+2,m}&=\frac{A_{2n+1,m}}{B_{2n+1,m}}-\frac{A_{2n+2,m}}{B_{2n+2,m}}=-\frac{\alpha_mc_{1,m}a_{1,m}\cdots c_{n+1,m}}{B_{2n+2,m}B_{2n+1,m}}<0, \quad n\ge 0.
\end{split}
\end{equation*}
Therefore we conclude that 
$$
0=h_{0,m}<\cdots <h_{2n,m}<h_{2n+1,m}<h_{2n+2,m}< \cdots <1,
$$
and then the sequences $(h_{n,m})_{n\geq0}$ are all bounded and strictly increasing, so they converge to $H_m$ for every $m=1,\ldots,N$. Now assume that $\sum_{m=1}^{N}H_m<1$ and $\bm P_R$ and $\bm P_A$ are stochastic matrices. Then it is clear that 
$$
\beta_m>0=h_{0,m},
$$
and using equation \eqref{alpha11} we have 
$$
s_{1,m}=\frac{\alpha_m}{\beta_m}<1 \Leftrightarrow \beta_m >\alpha_m=h_{1,m}.
$$
Using now equations \eqref{ccD}, \eqref{rs} and \eqref{alpha11} we have 
$$
y_{1,m}=\frac{c_{1,m}}{r_{1,m}}  \Leftrightarrow  1-s_{1,m}>c_{1,m}  \Leftrightarrow \beta_m>\frac{\alpha_m}{1-c_{1,m}}=h_{2,m},
$$
and in general it can be shown that 
$$
\beta_m>h_{n,m}.
$$
Therefore  $0=h_{0,m}<h_{n,m}<H_m\le \beta_m$. On the contrary, if \eqref{betarange} holds, in particular we have that $\beta_m>h_{n,m}$ for every $n\geq0, m=1,\ldots,N$. Following the same steps as before, using an argument of strong induction, will lead us to the fact that both $\bm P_R$ and $\bm P_L$ are stochastic matrices (see the proof of Proposition 2.1 of \cite{dIJ2} for more details).
\end{proof}

\begin{remark}
The reflecting-absorbing factorization $\bm P=\bm P_R\bm P_A$ is just one type of a stochastic block UL factorization of $\bm P$, but there can be more possibilities. Also we could have considered a stochastic block LU factorization of $\bm P$. As it was pointed out in \cite{GdI4}, the different stochastic block factorizations of $\bm P$ may come with many degrees of freedom, and the analysis is more complicated than the case of classical birth-death chains.
\end{remark}

\subsection{Stochastic Darboux transformation and the associated spectral matrix}\label{ssDarboux}

Once we have the conditions under we can perform a stochastic reflecting-absorbing factorization, it is possible to compute what is called a \emph{discrete Darboux transformation}, consisting on inverting the order of the factors. The Darboux transformation has a long history but probably the first reference of a discrete Darboux transformation like we study here appeared in \cite{MS} in connection with the Toda lattice.

If $\bm P=\bm P_R\bm P_A$ as in \eqref{PRA}, then, by inverting the order of the factors, we obtain another stochastic matrix of the form $\widetilde{\bm P}=\bm P_A\bm P_R$, since the multiplication of two stochastic matrices is again a stochastic matrix. This new matrix preserves the block tridiagonal structure. Also $\widetilde{\bm P}$ will be a family (depending on $N$ free parameters $\beta_1,\ldots,\beta_N$) of Markov chains $\{\widetilde S_n,n=0,1,\ldots\}$ on a spider $\mathbb{S}_N$ which is ``almost'' a family of birth-death chains. The only difference is that we will have extra transitions between the first states of each leg or, in other words, between the states $1,2,\ldots,N$. If we call $\widetilde{\bm B}_n,\widetilde{\bm A}_n,\widetilde{\bm C}_{n+1},n\geq0,$ the new coefficients of the block tridiagonal matrix $\widetilde{\bm P}$, a direct computations shows
\begin{equation}\label{ABCg}
\begin{split}
\widetilde{\bm A}_n&=\bm S_n\bm X_n, \quad n\ge 0,\\
\widetilde{\bm B}_0&=\bm S_0\bm Y_0, \quad \widetilde{\bm B}_n=\bm R_n\bm X_{n-1}+\bm S_n\bm Y_n, \quad n\ge 1, \\
\widetilde{\bm C}_n&=\bm R_n\bm Y_{n-1}, \quad n\ge 1.
\end{split}
\end{equation}
If we look entry by entry these relations can also be written as
\begin{equation*}
\begin{split}
\tilde \alpha_m&=\beta_m, \quad m= 0, 1, \dots, N,\\
\tilde \alpha_{n,m}&=s_{n,m}x_{n,m}, \quad n\geq1, \quad m=1, \dots, N,\\
\tilde b_{1,m}&=r_{1,m}\beta_m+s_{1,m}y_{1,m},\quad m=1, \dots, N, \\
\tilde b_{n,m}&=r_{n,m}x_{n-1,m}+s_{n,m}y_{n,m},\quad n\geq1, \quad m=1, \dots, N,\\
\tilde c_{1,m}&=r_{1,m}\beta_m,\quad m= 0, 1, \dots, N,\\
\tilde c_{n,m}&=r_{n,m}y_{n-1,m}, \quad n\geq1, \quad m=1, \dots, N.\\
\end{split}
\end{equation*}
We also have extra transition probabilities between the first states of each leg, given by 
$$
\tilde d_{i,j}=\beta_jr_{1,i}, \quad i,j =1, \dots, N,\quad i\neq j.
$$
A diagram of this process is similar that the one for the process described by $\bm P$ in Figure \ref{fig1} but now we have to add probabilities between the first states of each leg. For instance, for $N=2$ we get the diagram in page 10 of \cite{dIJ2}. For $N=3$ and $N=4$ we have the diagrams in Figure \ref{fig3}.
\begin{figure}[h]
\includegraphics[scale=0.25]{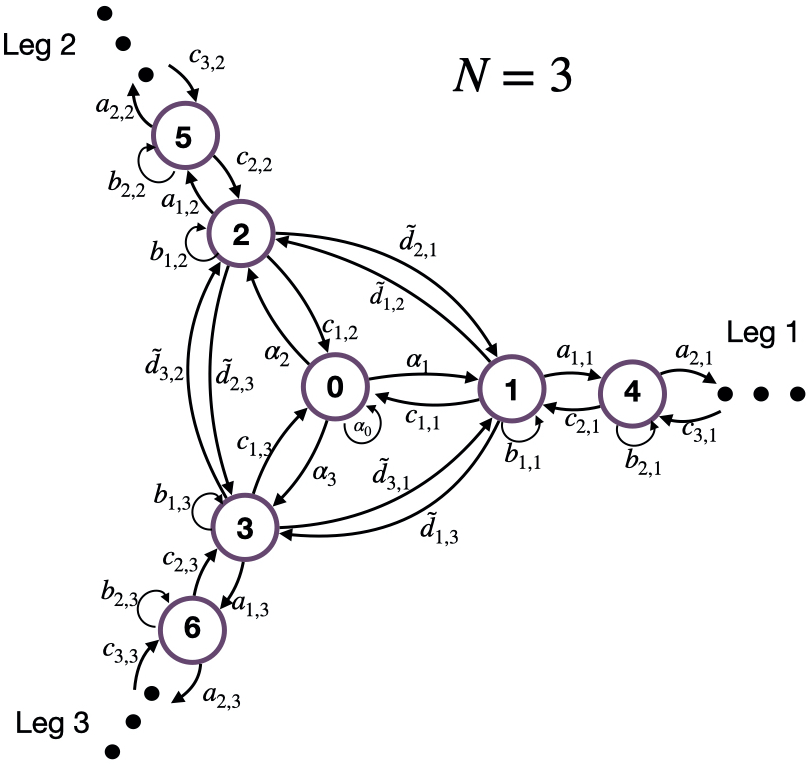}
\includegraphics[scale=0.25]{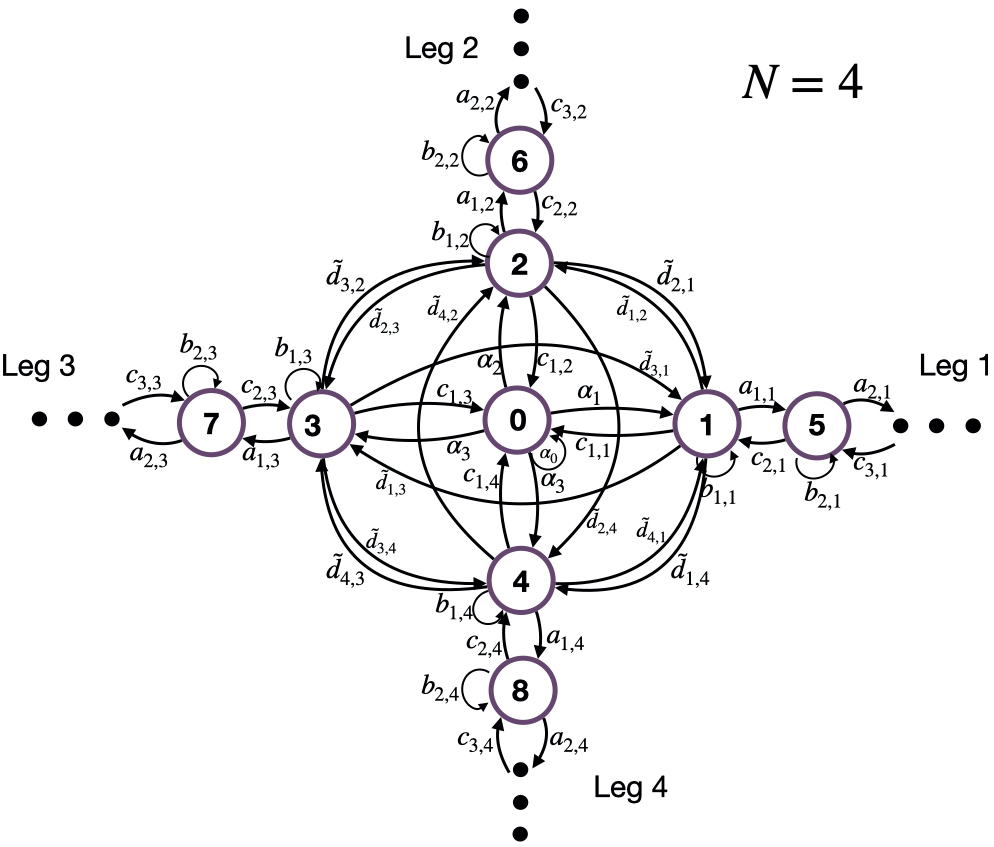}
\caption{Diagrams of the process associated with $\widetilde{\bm P}$ for $N=3$ and $N=4$.}
\label{fig3}
\end{figure}
In general we have to add $N(N-1)$ extra transition probabilities between the first states of each leg. 

In Proposition \ref{prop1} we proved that all birth-death chains $\{S_n, n=0,1,\ldots\}$  on a spider $\mathbb{S}_N$ have an associated weight matrix $\bm W$ supported on the interval $[-1,1]$. Now we wonder if we can find the spectral matrix $\widetilde{\bm W}$ associated with the Darboux transformation $\widetilde{\bm P}$. This is possible using Remark 2.4 of \cite{dIJ2}. The spectral matrix is then given by
\begin{equation}\label{spdtr}
\widetilde{\bm W}(x)=\bm S_0\left(\frac{\bm W(x)}{x}+\left[(\bm\Pi_0\bm Y_0\bm S_0)^{-1}-\bm M_{-1}\right]\delta_0(x)\right)\bm S_0^T,
\end{equation}
where $\bm S_0$ and  $\bm Y_0$ are given by \eqref{yyss}, $\bm \Pi_0$ is given by \eqref{Pinn} (see also \eqref{pi0}), $\bm M_{-1}=\int_{-1}^1x^{-1}\bm W(x)dx$ and $\delta_0(x)$ is the Dirac delta at $0$. A direct computation shows that 
$$
\bm\Pi_0\bm Y_0\bm S_0=\begin{pmatrix}
\alpha_0+\alpha_N-\beta_N&\alpha_1&\alpha_2&\cdots&\alpha_{N-1}\\
\alpha_1&\D\frac{\alpha_1^2}{\beta_1-\alpha_1}&0&\cdots&0\\
\alpha_2&0&\D\frac{\alpha_2^2}{\beta_2-\alpha_2}&\cdots&0\\
\vdots&\vdots&\vdots&\ddots&\vdots\\
\alpha_{N-1}&0&0&\cdots&\D\frac{\alpha_{N-1}^2}{\beta_{N-1}-\alpha_{N-1}}
\end{pmatrix}.
$$
Observe that this matrix is always symmetric. If we call $\bm X=(\bm \Pi_0\bm Y_0\bm S_0)^{-1}$, it can be shown that the entries of $\bm X=(X_{ij})$ for $i\leq j$ (for $i\geq j$ we have to change $i$ by $j$ since $\bm X$ is symmetric) are given by
\begin{equation}\label{XXij}
X_{ij}=\frac{1}{\beta_0}\begin{cases}
1,&\mbox{if}\quad i=j=1,\\
1-\D\frac{\beta_{j-1}}{\alpha_{j-1}},&\mbox{if}\quad i=1,j>1,\\
\left(1-\D\frac{\beta_{i-1}}{\alpha_{i-1}}\right)\left(1-\D\frac{\beta_{j-1}}{\alpha_{j-1}}\right),&\mbox{if}\quad i>1,j>i,\\
\left(1-\D\frac{\beta_{i-1}}{\alpha_{i-1}}\right)\left(1-\D\frac{\beta_{0}}{\alpha_{i-1}}-\D\frac{\beta_{i-1}}{\alpha_{i-1}}\right),&\mbox{if}\quad i>1,i=j.
\end{cases}
\end{equation}
However, it may be hard to compute the moment $\bm M_{-1}$. But observe that $\bm M_{-1}=B(0;\bm W)$, where $B(z;\bm W)$ is the Stieltjes transform of $\bm W$ defined by \eqref{Sttrdef}. Using Proposition \ref{prop2} we may have a way to compute explicitly $B(z;\bm W)$. In some cases, as we will see in the example of the next section, if will be possible to compute explicitly $B(z;\bm W)$ and therefore the moment $\bm M_{-1}$.

Finally, we can also compute the matrix-valued orthogonal polynomials $(\widetilde{\bm Q}_n)_{n\geq0}$ associated with $\widetilde{\bm W}$ using Theorem 2.3 of \cite{dIJ2}. Consider the matrix-valued polynomials
\begin{equation*}
\bm U_0(x)=\bm S_0\bm Q_0(x)=\bm S_0,\quad \bm U_n(x)=\bm R_n\bm Q_{n-1}(x)+\bm S_n\bm Q_n(x),\quad n\geq1,
\end{equation*}
where $(\bm S_n)_{n\geq0}$ and $(\bm R_n)_{n\geq1}$ are defined by \eqref{yyss}, \eqref{ssss} and \eqref{rrss}, respectively. If we denote by $\bm Q=\left(\bm Q_0^T,\bm Q_1^T,\cdots\right)^T$ and $\bm U=\left(\bm U_0^T,\bm U_1^T,\cdots\right)^T$, then we have that $\bm U=\bm P_A\bm Q$, where $\bm P_A$ is given by \eqref{PRA}. The matrix-valued orthogonal polynomials $(\widetilde{\bm Q}_n)_{n\geq0}$  are then defined by 
$$
\widetilde{\bm Q}_n(x)=\bm U_n(x)\bm S_0^{-1},\quad n\geq0.
$$
Since we have an explicit expression of the polynomials $(\bm Q_n)_{n\geq0}$ in \eqref{polmat} and
\begin{equation}\label{ssinv}
\bm S_0^{-1}=\left(
\begin{array}{ccccc}
1&0&0&\cdots&0\\
-\D\frac{r_{1,1}}{s_{1,1}}&\D\frac{1}{s_{1,1}}&0&\cdots&0\\
-\D\frac{r_{1,2}}{s_{1,2}}&0 &\D\frac{1}{s_{1,2}}&\cdots&0\\
\vdots&\vdots&&\ddots&\vdots\\
-\D\frac{r_{1,N-1}}{s_{1,N-1}}&0&\cdots&&\D\frac{1}{s_{1,N-1}}\\
\end{array}
\right),
\end{equation}
then we have that
\begin{equation}\label{polmatdar}
\widetilde{\bm Q}_n(x)=\begin{pmatrix}
R_{n,N}(x)&\D\frac{\alpha_1}{s_{1,1}}R_{n,N}^{(0)}(x)&\D\frac{\alpha_2}{s_{1,2}}R_{n,N}^{(0)}(x)&\cdots&\D\frac{\alpha_{N-1}}{s_{1,N-1}}R_{n,N}^{(0)}(x)\\
R_{n,1}^{(0)}(x)&R_{n,1}(x)&0&\cdots&0\\
R_{n,2}^{(0)}(x)&0&R_{n,2}(x)&\cdots&0\\
\vdots&\vdots&\vdots&\ddots&\vdots\\
R_{n,N-1}^{(0)}(x)&0&0&\cdots&R_{n,N-1}(x)
\end{pmatrix},\quad n\geq0,
\end{equation}
where
\begin{align*}
R_{n,N}(x)=&s_{n,N}Q_{n,N}(x)+r_{n,N}Q_{n-1,N}(x)-\left(s_{n,N}Q_{n,N}^{(0)}(x)+r_{n,N}Q_{n-1,N}^{(0)}(x)\right)\sum_{k=1}^{N-1}\frac{r_{1,k}\alpha_k}{s_{1,k}},\\
R_{n,N}^{(0)}(x)=&s_{n,N}Q_{n,N}^{(0)}(x)+r_{n,N}Q_{n-1,N}^{(0)}(x),\\
R_{n,k}(x)=&\frac{1}{s_{1,k}}\left(s_{n+1,k}Q_{n,k}(x)+r_{n+1,k}Q_{n-1,k}(x)\right),\quad k=1,\ldots,N-1,\\
R_{n,k}^{(0)}(x)=&s_{n+1,k}Q_{n,k}^{(0)}(x)+r_{n+1,k}Q_{n-1,k}^{(0)}(x)-\frac{r_{1,k}}{s_{1,k}}\left(s_{n+1,k}Q_{n,k}(x)+r_{n+1,k}Q_{n-1,k}(x)\right).
\end{align*}
As for the potential coefficients (see Remark \ref{rem22}) for the Markov chain $\{\widetilde S_n, n=0,1,\ldots\}$ on a spider $\mathbb{S}_N$ generated by $\widetilde{\bm P}$, we have, using formula (2.41) of \cite{dIJ2}, that
$$
\widetilde{\bm\Pi}_n=\bm Y_n^T\bm\Pi_n\bm S_n^{-1},\quad n\geq0.
$$
Therefore, using \eqref{yyss}, \eqref{ssinv}, \eqref{alpha11} and \eqref{ccD}, we obtain
\begin{equation*}
\widetilde{\bm\Pi}_0=\mbox{diag}\left(\beta_0, \D\frac{\beta_1}{r_{1,1}},\ldots, \D\frac{\beta_{N-1}}{r_{1,N-1}}\right)=\mbox{diag}\left(\beta_0, \D\frac{\beta_1^2}{\beta_1-\alpha_1},\ldots, \D\frac{\beta_{N-1}^2}{\beta_{N-1}-\alpha_{N-1}}\right).
\end{equation*}
Therefore $(\widetilde{\bm\Pi}_n)_{n\geq0}$ are always \emph{diagonal matrices}. As a consequence we obtain the (diagonal) norms of the matrix-valued orthogonal polynomials $(\widetilde{\bm Q}_n)_{n\geq0}$:
\begin{equation*}
\widetilde{\bm\Pi}_n=\left(\|\widetilde{\bm Q}_n\|_{\widetilde{\bm W}}^2\right)^{-1}=\left(\int_{-1}^1\widetilde{\bm Q}_n(x)d\widetilde{\bm W}(x)\widetilde{\bm Q}_n^T(x)\right)^{-1},
\end{equation*}
and the orthogonality relations
\begin{equation*}
\int_{-1}^1\widetilde{\bm Q}_n(x)d\widetilde{\bm W}(x)\widetilde{\bm Q}_m^T(x)=\widetilde{\bm\Pi}_n^{-1}\delta_{n,m}.
\end{equation*}

%%%%%%%%%%%%%%%%%%%%%
%%%%%%%%%%%%%%%%%%%%%
%%%%%%%%%%%%%%%%%%%%%
%%%%%%%%%%%%%%%%%%%%%
\section{An example: random walk on a spider}
\label{sec4}

Consider the block tridiagonal transition probability matrix \eqref{tpmm} with constant transition probability coefficients, i.e.
$$
\bm B_0=\begin{pmatrix}
\alpha_0&\alpha_1&\alpha_2&\cdots&\alpha_{N-1}\\
c&b&0&\cdots&0\\
c&0&b&\cdots&0\\
\vdots&\vdots&\vdots&\ddots&\vdots\\
c&0&0&\cdots&b
\end{pmatrix},\; \bm A_0=\mbox{diag}\left(\alpha_N, a,\ldots, a\right),\;
\bm A_n=a\bm I_N,\; \bm B_n=b\bm I_N,\;\bm C_n=c\bm I_N,\; n\geq1,
$$
where $\sum_{k=0}^N\alpha_k=1,$ and $a+b+c=1.$ Observe that in this case the vector $\vec{\bm c}$ defined by \eqref{ccss} is given by $\vec{\bm c}=c\,\vec{\bm e}_{N-1}$, where  $\vec{\bm e}_k=(1,1,\ldots,1)^T$ is the vector of dimension $k$ with all components equal to 1. Also the matrix $\bm c_D$ defined by \eqref{ccss} is given by $\bm c_D=c\,\bm I_{N-1}$. The Stieltjes transforms of $\omega_k^{(0)},k=1,\ldots,N,$ do not depend on $k$ and are given by \begin{equation}\label{Bzw0}
B(z;\omega_k^{(0)})=\frac{b-z+\sqrt{(z-\sigma_+)(z-\sigma_-)}}{2ac},\quad \sigma_{\pm}=1-(\sqrt{a}\mp\sqrt{c})^2.
\end{equation}
Therefore, using the second formula in \eqref{bzomerel} and rationalizing, we obtain
$$
B(z;\omega_k)=-\frac{2}{z-b+\sqrt{(z-\sigma_+)(z-\sigma_-)}}=\frac{b-z+\sqrt{(z-\sigma_+)(z-\sigma_-)}}{2ac},\quad k=1,\ldots,N-1.
$$
As a consequence
$$
B(z;\bm\omega_D)=\frac{b-z+\sqrt{(z-\sigma_+)(z-\sigma_-)}}{2ab}\bm I_{N-1}.
$$
On the other hand, using the first formula in \eqref{bzomerel} and \eqref{Bzw0}, we obtain
$$
\frac{1}{B(z;\omega_N)}=-\left[z-\alpha_0+\alpha_NcB(z;\omega_N^{(0)})\right]=-\left[z-\alpha_0+\frac{\alpha_N}{2a}\left(b-z+\sqrt{(z-\sigma_+)(z-\sigma_-)}\right)\right].
$$
Using the previous two formulas in \eqref{B11z} and the fact that $\alpha_1+\cdots+\alpha_{N-1}=1-\alpha_0-\alpha_N$, we obtain an expression for $\mathfrak{b}(z)$ in Proposition \ref{prop2}, given by
$$
\mathfrak{b}(z)=\frac{1}{\alpha_0-\D\frac{b(1-\alpha_0)}{2a}-\left(1-\frac{1-\alpha_0}{2a}\right)z-\frac{1-\alpha_0}{2a}\sqrt{(z-\sigma_+)(z-\sigma_-)}}.
$$
After rationalizing we get
\begin{equation*}\label{betazex}
\mathfrak{b}(z)=\frac{(1-2a-\alpha_0)z-b+\alpha_0(1+a-c)+(1-\alpha_0)\sqrt{(z-\sigma_+)(z-\sigma_-)}}{2(1-z)\left[(1-a-\alpha_0)z+c-\alpha_0(1-a+c-\alpha_0)\right]}.
\end{equation*}
Therefore we have all the functions necessary to compute the Stieltjes transform of $\bm W$, given by \eqref{Sttgen} of Proposition \ref{prop2}. After some computations we can write $B(z;\bm W)$ as
\begin{equation}\label{sttrW}
B(z;\bm W)=\left(
\begin{array}{c|c}
B_{11}(z;\bm W)&B_{12}(z;\bm W)\vec{\bm e}\,^T_{N-1}\\
\hline
B_{12}(z;\bm W)\vec{\bm e}_{N-1}&B_{22}(z;\bm W)
\end{array}
\right),
\end{equation}
where
\begin{align*}
B_{11}(z;\bm W)&=\mathfrak{b}(z),\\
B_{12}(z;\bm W)&=\frac{2c-\alpha_0(1-a+c)+(b+\alpha_0)z-z^2+(z-\alpha_0)\sqrt{(z-\sigma_+)(z-\sigma_-)}}{2(1-z)\left[(1-a-\alpha_0)z+c-\alpha_0(1-a+c-\alpha_0)\right]},\\
B_{22}(z;\bm W)&=\frac{b-z+\sqrt{(z-\sigma_+)(z-\sigma_-)}}{2a}\bm\alpha_D^{-1}\\
&\qquad+\frac{p(z)+r(z)\sqrt{(z-\sigma_+)(z-\sigma_-)}}{2a(1-z)\left[(1-a-\alpha_0)z+c-\alpha_0(1-a+c-\alpha_0)\right]}\vec{\bm e}_{N-1}\vec{\bm e}_{N-1}^T,
\end{align*}
and
\begin{align*}
p(z)&=-z^3+(\alpha_0+2b)z^2-(\alpha_0(2-2a-c)+b^2-2ac-c)z-bc+\alpha_0(b-a(1-a+c)),\\
r(z)&=z^2-(\alpha_0+b)z-c+\alpha_0(1-a).
\end{align*}
The weight matrix $\bm W$ will consist in the addition of an absolutely continuous part $\bm W_c$ and a discrete part $\bm W_d$, i.e. $\bm W=\bm W_c+\bm W_d$. On one hand, using the Perron-Stieltjes inversion formula, we get that $\bm W_c$ is given by
\begin{equation}\label{WWc}
\bm W_c(x)=\left(
\begin{array}{c|c}
W_{11}(x)&W_{12}(x)\vec{\bm e}\,^T_{N-1}\\
\hline
W_{12}(x)\vec{\bm e}_{N-1}&W_{22}(x)
\end{array}
\right),\quad x\in[\sigma_-,\sigma_+],
\end{equation}
where
\begin{align*}
W_{11}(x)&=\frac{(1-\alpha_0)\sqrt{(\sigma_+-x)(x-\sigma_-)}}{2\pi(1-x)\left[(1-a-\alpha_0)x+c-\alpha_0(1-a+c-\alpha_0)\right]},\\
W_{12}(x)&=\frac{(x-\alpha_0)\sqrt{(\sigma_+-x)(x-\sigma_-)}}{2\pi(1-x)\left[(1-a-\alpha_0)x+c-\alpha_0(1-a+c-\alpha_0)\right]},\\
W_{22}(x)&=\frac{\sqrt{(\sigma_+-x)(x-\sigma_-)}}{2\pi a}\bm\alpha_D^{-1}+\frac{r(x)\sqrt{(\sigma_+-x)(x-\sigma_-)}}{2\pi a(1-x)\left[(1-a-\alpha_0)x+c-\alpha_0(1-a+c-\alpha_0)\right]}\bm e_{N-1}\bm e_{N-1}^T.
\end{align*}
On the other hand, the discrete part $\bm W_d$ will be given by the behavior of the Stieltjes transform $B(z;\bm W)$ \eqref{sttrW} at its poles, given in this case by
$$
z_1=1,\quad z_2=\frac{\alpha_0(1-a+c-\alpha_0)-c}{1-a-\alpha_0}.
$$
After some computations we get
\begin{equation}\label{WWd}
\bm W_d(x)=\frac{c-a}{c-a+1-\alpha_0}\delta_1(x)\vec{\bm e}_N\vec{\bm e}\,^T_N\chi_{\{c>a\}}+\frac{(1-\alpha_0-a)^2-ac}{(1-\alpha_0)(1-\alpha_0-a+c)}\delta_{z_2}(x)\vec{\bm u}_N\vec{\bm u}_N^T\chi_{\{(1-\alpha_0-a)^2>ac\}},
\end{equation}
where $\chi_A$ is the indicator function and
$$
\vec{\bm u}_N=\left(-1,\frac{c}{1-\alpha_0-a},\ldots,\frac{c}{1-\alpha_0-a}\right)^T.
$$
From Remark \ref{rem23} we can study recurrence for the random walk on a spider. There will be three cases:
\begin{itemize}
\item If $a>c,$ then we have that $[\sigma_-,\sigma_+]\subsetneq[-1,1]$. Therefore all integrals in \eqref{condrec} are bounded and the random walk on a spider is transient.
\item If $a=c,$ then we have $[\sigma_-,\sigma_+]=[1-4a,1]$. Therefore all integrals in \eqref{condrec} are divergent and the random walk on a spider is null recurrent (since there is no jump at the point 1).
\item If $a<c,$ there will always be a jump at the point 1 (see \eqref{WWd}). Therefore the random walk on a spider is positive recurrent.
\end{itemize}

Finally, it is possible to see, by looking at the three-term recurrence relations \eqref{recrel1} and \eqref{recrel2}, that the entries of the matrix-valued polynomials $\bm Q_n(x)$ in \eqref{polmat} are given by
\begin{equation}\label{chebpoly}
\begin{split}
Q_{n,N}(x)&=\frac{1}{\alpha_N}\left(\frac{c}{a}\right)^{n/2}\left[2(\alpha_N-a)T_n\left(\frac{x-b}{2\sqrt{ac}}\right)+(2a-\alpha_N)U_n\left(\frac{x-b}{2\sqrt{ac}}\right)+\sqrt{\frac{a}{c}}(b-\alpha_0)U_{n-1}\left(\frac{x-b}{2\sqrt{ac}}\right)\right],\\
Q_{n,N}^{(0)}(x)&=-\frac{1}{\alpha_N}\left(\frac{c}{a}\right)^{(n-1)/2}U_{n-1}\left(\frac{x-b}{2\sqrt{ac}}\right),\\
Q_{n,k}(x)&=\left(\frac{c}{a}\right)^{n/2}U_n\left(\frac{x-b}{2\sqrt{ac}}\right),\quad Q_{n,k}^{(0)}(x)=-\left(\frac{c}{a}\right)^{(n+1)/2}U_{n-1}\left(\frac{x-b}{2\sqrt{ac}}\right),\quad k=1,\ldots,N-1,\\
\end{split}
\end{equation}
where $(T_n)_n$ and $(U_n)_n$ are the Chebychev polynomials of the first and second kind, respectively.

%\begin{remark}
%The polynomials $(Q_{n,N})_{n\ge0}$ in \eqref{chebpoly} can be written in terms of \emph{perturbed Chebychev polynomials} (see pages 204-205 of \cite{Ch}). Indeed, these perturbed Chebychev polynomials $(P_n)_{n\ge0}$ are defined in terms of the three-term recurrence relation
%$$
%P_0(x)=1,\quad P_1(x)=\delta x-\gamma,\quad xP_n(x)=\frac{1}{2}P_{n+1}(x)+\frac{1}{2}P_{n-1}(x),\quad n\geq1,\quad \delta\neq0.
%$$
%These polynomials can also be written in terms of Chebychev polynomials of the first and second kind (see formula (13.4) of \cite{Ch}). If we use the well-known relation $U_{n-2}(x)=U_n(x)-2T_n(x)$ in (13.4) of \cite{Ch} we obtain
%$$
%P_n(x)=(2-\delta)T_n(x)-(1-\delta)U_n(x)-\gamma U_{n-1}(x),\quad n\geq0.
%$$
%A direct identification with the expression of the polynomials $(Q_{n,N})_{n\ge0}$ in \eqref{chebpoly} shows that we need to choose
%$$
%\delta=\frac{2a}{\alpha_N},\quad \gamma=\sqrt{\frac{a}{c}}\frac{\alpha_0-b}{\alpha_N},
%$$
%in order to relate $(Q_{n,N})_{n\ge0}$ with the perturbed Chebychev polynomials $(P_n)_{n\ge0}$. Therefore we obtain
%$$
%Q_{n,N}(x)=\left(\frac{c}{a}\right)^{n/2}P_n\left(\frac{x-b}{2\sqrt{ac}}\right),\quad n\ge0.
%$$
%\end{remark}

\smallskip

Let us now apply Theorem \ref{thm1} and see under what conditions we can perform a reflecting-absorbing factorization of the form $\bm P=\bm P_R\bm P_A$ for this example as the one we saw in Section \ref{sec3}. The continued fractions in \eqref{Hcf} can be written as $H_m=\alpha_m/H$, where
$$
H=1-\cFrac{c}{1}-\cFrac{a}{1}-\cFrac{c}{1}-\cFrac{a}{1}-\cdots.
$$
$H$ is a continued fraction of period 2, and the explicit value is given by
$$
H=\frac{1}{2}\left(1+a-c+\sqrt{(1+c-a)^2-4c}\right),
$$
as long as $a\leq(1-\sqrt{c})^2$. Therefore, after rationalizing, we get
$$
H_m=\frac{\alpha_m}{2a}\left(1+a-c-\sqrt{(1+c-a)^2-4c}\right),\quad m=1,\ldots,N.
$$
According to Theorem \ref{thm1}, the stochastic reflecting-absorbing factorization will be possible if and only if 
\begin{equation}\label{betam1}
\beta_m\geq\frac{\alpha_m}{2a}\left(1+a-c-\sqrt{(1+c-a)^2-4c}\right),\quad m=1,\ldots,N,
\end{equation}
and from $\sum_{m=1}^N H_m<1$ we need to have
\begin{equation}\label{alpha01}
\alpha_0>\frac{1}{2}\left(1-a+c-\sqrt{(1+c-a)^2-4c}\right).
\end{equation}
For instance, for $N=3$, the set of values
$$
a=\frac{1}{5},\quad b=\frac{11}{20},\quad c=\frac{1}{4},\quad \alpha_0=\frac{1}{2},\quad\alpha_1=\frac{1}{8},\quad\alpha_2=\frac{1}{6},\quad\alpha_3=\frac{5}{24},
$$
gives a stochastic reflecting-absorbing factorization if and only if
$$
\beta_1\geq\frac{19-\sqrt{41}}{64}\sim0.196826\cdots,\quad \beta_2\geq\frac{19-\sqrt{41}}{48}\sim0.262434\cdots,\quad\beta_3\geq\frac{95-5\sqrt{41}}{192}\sim0.328043\cdots.
$$

Under conditions \eqref{betam1} and \eqref{alpha01} we can perform a stochastic discrete Darboux transformation given by $\widetilde{\bm P}=\bm P_A\bm P_R$. This new block tridiagonal matrix $\widetilde{\bm P}$ gives rise to a family (depending on $N$ free parameters $\beta_1,\ldots,\beta_N$) of Markov chains $\{\widetilde S_n,n=0,1,\ldots\}$ on a spider $\mathbb{S}_N$ with coefficients given by \eqref{ABCg}. As we mentioned in Section \ref{ssDarboux}, $\widetilde{\bm P}$ is ``almost'' a birth-death chain on a spider, since we have to add extra probability transitions between the first states of each leg (see Figure \ref{fig3}). A direct computation shows that the new coefficients $\widetilde{\bm B}_0$ and $\widetilde{\bm A}_0$ of $\widetilde{\bm P}$ (see \eqref{ABCg}), which give the extra transitions between the first states of each leg, are given by
$$
\widetilde{\bm B}_0=\left(
\begin{array}{ccccc}
\beta_0&\beta_1&\beta_2&\cdots&\beta_{N-1}\\
\beta_0\left(1-\D\frac{\alpha_1}{\beta_1}\right)&\beta_1-\alpha_1+\D\frac{c\alpha_1}{\beta_1-\alpha_1}&\beta_2\left(1-\D\frac{\alpha_1}{\beta_1}\right)&\cdots&\beta_{N-1}\left(1-\D\frac{\alpha_1}{\beta_1}\right)\\
\beta_0\left(1-\D\frac{\alpha_2}{\beta_2}\right)&\beta_1\left(1-\D\frac{\alpha_2}{\beta_2}\right) &\beta_2-\alpha_2+\D\frac{c\alpha_2}{\beta_2-\alpha_2}&\cdots&\beta_{N-1}\left(1-\D\frac{\alpha_2}{\beta_2}\right)\\
\vdots&\vdots&&\ddots&\vdots\\
\beta_0\left(1-\D\frac{\alpha_{N-1}}{\beta_{N-1}}\right)&\beta_1\left(1-\D\frac{\alpha_{N-1}}{\beta_{N-1}}\right)&\cdots&&\beta_{N-1}-\alpha_{N-1}+\D\frac{c\alpha_{N-1}}{\beta_{N-1}-\alpha_{N-1}}\\
\end{array}
\right),
$$
$$
\widetilde{\bm A}_0=\left(
\begin{array}{ccccc}
\beta_N&0&0&\cdots&0\\
\beta_N\left(1-\D\frac{\alpha_1}{\beta_1}\right)&\D\frac{\alpha_{1}}{\beta_{1}}-\D\frac{c\alpha_{1}}{\beta_{1}-\alpha_{1}}&0&\cdots&0\\
\beta_N\left(1-\D\frac{\alpha_2}{\beta_2}\right)&0 &\D\frac{\alpha_{2}}{\beta_{2}}-\D\frac{c\alpha_{2}}{\beta_{2}-\alpha_{2}}&\cdots&0\\
\vdots&\vdots&&\ddots&\vdots\\
\beta_N\left(1-\D\frac{\alpha_{N-1}}{\beta_{N-1}}\right)&0&\cdots&&\D\frac{\alpha_{N-1}}{\beta_{N-1}}-\D\frac{c\alpha_{N-1}}{\beta_{N-1}-\alpha_{N-1}}\\
\end{array}
\right).
$$
Finally, the weight matrix $\widetilde{\bm W}$ associated with $\widetilde{\bm P}$ is given by \eqref{spdtr}, i.e.
\begin{equation*}\label{spdtrex}
\widetilde{\bm W}(x)=\bm S_0\left(\frac{\bm W(x)}{x}+\left[\bm X-\bm M_{-1}\right]\delta_0(x)\right)\bm S_0^T,
\end{equation*}
where
$$
\bm S_0=\left(
\begin{array}{ccccc}
1&0&0&\cdots&0\\
1-\D\frac{\alpha_1}{\beta_1}&\D\frac{\alpha_{1}}{\beta_{1}}&0&\cdots&0\\
1-\D\frac{\alpha_2}{\beta_2}&0 &\D\frac{\alpha_{2}}{\beta_{2}}&\cdots&0\\
\vdots&\vdots&&\ddots&\vdots\\
1-\D\frac{\alpha_{N-1}}{\beta_{N-1}}&0&\cdots&&\D\frac{\alpha_{N-1}}{\beta_{N-1}}\\
\end{array}
\right),
$$
the weight matrix $\bm W=\bm W_c+\bm W_d$ is given by  \eqref{WWc} and \eqref{WWd}, $\bm X$ is the symmetric matrix given by \eqref{XXij} and from \eqref{sttrW} we have
\begin{equation*}\label{Mm1}
\bm M_{-1}=\left(
\begin{array}{c|c}
\mu_{11}&\mu_{12}\vec{\bm e}\,^T_{N-1}\\
\hline
\mu_{12}\vec{\bm e}_{N-1}&\bm\mu_{22}
\end{array}
\right),
\end{equation*}
where
\begin{align*}
\mu_{11}&=\frac{\alpha_0(1+a-c)-b-(1-\alpha_0)\sqrt{\sigma_+\sigma_-}}{2\left[c-\alpha_0(1-a+c-\alpha_0)\right]},\quad \mu_{12}=\frac{2c-\alpha_0(1-a+c)+\alpha_0\sqrt{\sigma_+\sigma_-}}{2\left[c-\alpha_0(1-a+c-\alpha_0)\right]},\\
\bm\mu_{22}&=\frac{b-\sqrt{\sigma_+\sigma_-}}{2a}\bm\alpha_D^{-1}+\frac{\alpha_0(b-a(1-a+c))-bc-(\alpha_0(1-a)-c)\sqrt{\sigma_+\sigma_-}}{2a\left[c-\alpha_0(1-a+c-\alpha_0)\right]}\vec{\bm e}_{N-1}\vec{\bm e}_{N-1}^T,
\end{align*}
The matrix-valued polynomials $(\widetilde{\bm Q}_n)_{n\geq0}$ orthogonal with respect to $\widetilde{\bm W}$ can be computed from \eqref{polmatdar} and can also be written as combinations of Chebychev polynomials of the first and second kind (see \eqref{chebpoly}).

\end{document}